\documentclass[12pt]{amsart}

\headheight=8pt     \topmargin=0pt \textheight=624pt
\textwidth=432pt \oddsidemargin=18pt \evensidemargin=18pt
\usepackage{amscd,amssymb,amsfonts,amsmath,amstext,amsthm}

\newtheorem{theorem}{Theorem}[section]
\newtheorem{proposition}[theorem]{Proposition}
\newtheorem{lemma}[theorem]{Lemma}
\newtheorem{corollary}[theorem]{Corollary}

\newtheorem{example}[theorem]{Example}

\newtheorem{remark}[theorem]{Remark}

\renewenvironment{proof}{{\noindent\bf Proof.}}{\hfill $\Box$\par\vskip3mm}

\newcommand{\ot}{\otimes}

\newcommand{\eps}{\varepsilon}

\newcommand{\La}{\Lambda}

\newcommand{\la}{\lambda}

\def\RR{{\mathbb R}}

\def\CC{{\mathbb C}}
\def\ZZ{{\mathbb Z}}
\def\QQ{{\mathbb Q}}

%

\newcommand{\num}{\refstepcounter{theorem}}

\begin{document}

\title{$FSZ$-groups and Frobenius-Schur indicators of Quantum Doubles}

\begin{abstract}
We study the higher Frobenius-Schur indicators of the representations of the 
Drinfel'd double of a finite group $G$, in particular the question as to when 
all the indicators are integers.\ This turns out to be an interesting group-theoretic question.\
We show that many groups have 
this property, such as alternating and symmetric groups, $PSL_2(q)$, $M_{11}$, $M_{12}$
and regular nilpotent groups. However we show there is an irregular nilpotent group of 
order $5^6$ with non-integer indicators.
\end{abstract}

\author{M. Iovanov}
\address{University of Bucharest, Romania and Department of Mathematics, University of Iowa, Iowa City, IA}
\email{yovanov@gmail.com}

\author{G. Mason}
\address{Department of Mathematics, University of California at Santa Cruz, CA }
\email{gem@ucsc.edu}

\author{S. Montgomery}
\address{Department of Mathematics, University of Southern California, Los Angeles, CA }
\email{smontgom@usc.edu}
\thanks{The first author was supported by an AMS-Simons travel grant and by CNCS grant TE45, no.88/02.02.2010.\ The second and third authors were supported by the NSF}

\thanks{2000 \textit{MSC
Primary 20D99, 20C99; Secondary 16T05}}
\date{}
\keywords{}
\maketitle



\section{Introduction}\label{s0}

In the representation theory of finite groups, Frobenius-Schur indicators and their higher analogs are an important set of invariants. For Hopf algebras, the second indicator was generalized in \cite{LM}, and higher indicators in \cite{KSZ}.
They were then extended to quasi-Hopf algebras \cite{MN05} and tensor categories \cite{NS07,NS11}. An important property is that they are gauge invariants, that is, invariants of the tensor category of representations. They proved to have many interesting applications; for example, in \cite{KSZ} it is proved that for $H$ a semi-simple Hopf algebra over $\mathbb{C}$, the exponent of $H$ and the dimension of $H$ have the same prime factors, answering a question of \cite{EG}. They can also be used to distinguish between the tensor categories having the same Grothendieck ring, such as the 8 types of non-isomorphic semi-simple quasi-Hopf algebras (tensor categories) of dimension 8 over 
$\mathbb{C}$. Thus knowing the possible values of the indicators is an interesting problem. In this paper 
we study the case of $H = D(G)$, the Drinfel'd double of a finite group $G$. 

Throughout we work over the complex numbers $\CC$. 
Let $H$ be a semisimple Hopf algebra over $\CC$ and let $V$ be an irreducible representation of $H$ with 
character $\chi$. As defined in \cite{KSZ}, the $n^{th}$ Frobenius-Schur indicator of $V$ is given by
$$\nu_n(V) : =  \chi(\La^{[n]}), $$
where $n$ is a positive integer, $\La$ is a normalized integral of $H$ and for any $h \in H$, $h^{[m]} =h_1h_2\dots h_m.$

\medskip
In the classical case of a group algebra $H =\CC G$ for a finite group $G$,
 $\nu_n(V) : =  \chi(|G|^{-1} \sum_{g\in G} g^n)$, and it is known (cf.\ (\ref{zeta'form}) below) that all higher indicators are integers. 
 
 \medskip
For arbitrary Hopf algebras this is not the case.\ Let $e = e(H)$ denote the exponent of $H$: this is the smallest number $m$ such that 
$h^{[m]}=\eps(h) 1$ for all $h\in H$, and always exists if $H$ is semisimple.\  \cite[Section 3.2]{KSZ} show that $\nu_n(V)$ is an algebraic integer in $\QQ(\zeta_e)$, the $e^{th}$ cyclotomic field, and moreover $\nu_n(V)\in \QQ(\zeta_{e/\gcd(n,e)}) \cap \QQ(\zeta_n)$.\ They construct an example in which the value is not a rational integer. 

\medskip
In this paper we are concerned with the Hopf algebra $D(G)$, the Drinfel'd double of $G$.\ 
It was an open question whether or not all indicator values for $D(G)$ are integers.\ For a positive
integer $n$, we say that $G$
is an $FSZ_n$-group if every $n^{th}$ indicator $\nu_n(V)$ for $D(G)$ is an integer; we say that
$G$ is an $FSZ$-group if it is $FSZ_n$ for all $n$.\ So the question amounts to this: is every
finite group $FSZ_n$ for all $n$?\ We will show that a variety of finite groups indeed have this property.\
These include symmetric and alternating groups, $2$-dimensional linear groups $PSL_2(q)$, the
sporadic simple groups $M_{11}$ and $M_{12}$,
\emph{regular CN}-groups (the centralizer of each nonidentity element is a direct product
of regular $p$-groups in the sense of P.\ Hall \cite{H}), and groups whose
exponent is `not too composite'.\  In a slightly different direction, we show
 that \emph{every} $G$ is $FSZ_n$ for $n=2, 3, 4$ or $6$ (in fact \cite{LM} show $\nu_2(V)\in \{0, \pm1\})$), and that if $G$ is not $FSZ_n$ then there is an
 element $g\in G$ of order $5$ or greater than $7$ such that $C_G(g)$ is not $FSZ_n$.\ These results
 suggest that the class of $FSZ$-groups is extensive.\ Indeed, we have made no attempt
 to be as general as possible, and there are certainly other classes of
 finite groups which can be shown to be $FSZ$ using the methods that we develop in the present paper.\ The most tantalizing open question is whether every finite simple group is $FSZ$.
 
 \medskip
 Despite the proliferation of $FSZ$-groups, it turns out that there are indeed non$FSZ$-groups, indeed
 there are infinitely many of them.\ Because of the results mentioned in the previous paragraph,
 a natural first place to look
for a nonFSZ group would be an irregular $5$-group of order $5^6$ and exponent $5^2$, and we shall see
that indeed there are non$FSZ$ groups of this type.

\medskip
The paper is arranged as follows.\ In Section 2 we give the basic definitions and  some
results about the indicators of $D(G)$ of a combinatorial and character-theoretic nature.\ In Section 3
we use these results to give (Theorem \ref{thmequivs})
various characterizations of $FSZ$-groups.\ We also introduce $FSZ^+$-groups; these are finite groups satisfying a natural group-theoretic condition that is qualitatively stronger than the $FSZ$ condition.\ 
$FSZ^+$-groups have a very convenient inductive property (Theorem \ref{thmcent1}) that we use
extensively in Sections 4 and 5 to show that various classes of groups are $FSZ^+$.\
Indeed, we know of no group that is $FSZ$ but \emph{not} $FSZ^+$.\ In Section 6 we show that all symmetric and alternating groups are $FSZ$.\ Section 7 is devoted to counterexamples, i.e.,
the existence of non$FSZ$-groups, including dedicated computer programs that can run on GAP
to test the $FSZ$ property.

\medskip
We thank Siu-Hung Ng and Pavel Etingof for communicating some of
their unpublished results to us.\  Etingof's result, that wreathed products 
$S_n\wr A$ ($A$ abelian) are $FSZ$, is discussed in Section 6.

\medskip

\section{The sets $G_n(u, g)$}\label{s1} 
From now on, $G$ will always denote a finite group.\  Let $I(G)$ denote the \emph{irreducible characters} of $G, R(G)$ the ring of 
\emph{virtual characters} of $G$ ($\ZZ$-linear combinations of irreducible characters), $cf(G)$ the $\mathbb{C}$-algebra of \emph{class functions} of $G$, and
$\langle \ , \ \rangle_G$ the usual inner product defined on $cf(G)$.

\medskip
Fix a conjugacy class $\mathcal{C}$ of $G$ and an element $u \in \mathcal{C}$.\ Let $C = C(u)$ be the centralizer of 
$u$ in $G$, and let $W$ be an irreducible representation of $C(u)$.\ Then with a suitable action of 
$\CC^G$, $V = \CC G \ot_{\CC C(u)} W$ is an irreducible representation of $D(G)$; moreover all irreducible representations of $D(G)$ are 
obtained in this way \cite{KMM}.

\medskip
Let $\eta$ denote the character of $C$ furnished by $W$, and $\chi= \chi_{\eta}$ denote the corresponding $D(G)$-character of $V$.\ Given this data, we use the notation $V=V_{C, \eta}$ and $W = W_\eta$.
\medskip
For $g \in G$, define (as in \cite[7.2]{KSZ})
\begin{eqnarray*}
G_n(u, g)&=&\{a \in G \ | \ (au^{-1})^n= a^n = g\},
 \end{eqnarray*}
with
\begin{equation*}G_n(u)= \bigcup_{g\in G} G_n(u,g) =\{a \in G \ | \ (au^{-1})^n=a^n \}.
\end{equation*}
For a subgroup $H\subseteq G$ we also set
\begin{eqnarray*}
G^H_n(u, g)&=&\{a \in H \ | \ (au^{-1})^n= a^n = g\}.
 \end{eqnarray*}

It is easy to see that if $a \in G_n(u)$, then $a^n \in C$. Thus $G_n(u, g)=\phi$ unless $g\in C$, 
$G_n(u, g)= G_n^{C(g)}(u, g)$, and
$$G_n(u) = \bigcup_{g\in C} \ G_n(u,g).$$
 
 \medskip
Combining the formula for $\nu_n(V)$ given in \cite[7.4]{KSZ} with our remarks about $G_n(u)$, we see
 \begin{eqnarray} 
\num \label{FSdef2}\nu_n(\chi_{\eta})&=&|C|^{-1}\sum\limits_{g\in C}|G_n(u,g)|\eta(g)\\
\num  \label{FSdef3}
&=& |C|^{-1}\sum_{a\in G_n(u)} \eta(a^n). 
\end{eqnarray}

\medskip
The following character-theoretic characterization of the $n^{th}$ indicator 
will be useful.\ First we introduce
\begin{eqnarray*}\label{zeta}
\zeta_n^C= \zeta_n: C\rightarrow \mathbb{Z},\  g \mapsto |G_n(u, g)|.
\end{eqnarray*}
Note that $\zeta_n \in cf(C)$.
\begin{proposition}\label{char} The following hold.
\begin{eqnarray} 
\num \label{zetaform} \zeta_n &=& \sum_{\eta \in I(C)} \nu_n(\chi_{\eta})\eta,\\
\num  \label{FSdef4} \nu_n(\chi_{\eta}) &=& \langle \zeta_n, \eta \rangle_C.
\end{eqnarray}
\end{proposition}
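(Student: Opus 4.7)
The plan is to derive both formulas from (\ref{FSdef2}) by reducing them to a single combinatorial identity: $\zeta_n(g^{-1}) = \zeta_n(g)$ for every $g \in C$. Granted this, (\ref{FSdef4}) follows by unwinding the inner product: using $\overline{\eta(g)} = \eta(g^{-1})$ and the substitution $g \mapsto g^{-1}$,
\[
\langle \zeta_n,\eta\rangle_C \;=\; |C|^{-1}\sum_{g\in C}\zeta_n(g)\eta(g^{-1}) \;=\; |C|^{-1}\sum_{g\in C}\zeta_n(g^{-1})\eta(g) \;=\; |C|^{-1}\sum_{g\in C}\zeta_n(g)\eta(g) \;=\; \nu_n(\chi_\eta).
\]
Formula (\ref{zetaform}) then falls out from Fourier inversion in $cf(C)$: since $\zeta_n$ is a class function on $C$ and $I(C)$ is an orthonormal basis, $\zeta_n = \sum_\eta \langle \zeta_n,\eta\rangle_C \,\eta = \sum_\eta \nu_n(\chi_\eta)\,\eta$.

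The real content is therefore the claim that $|G_n(u,g)| = |G_n(u,g^{-1})|$ for all $g \in C$. I would establish this by producing the explicit map $\phi : G_n(u,g) \to G_n(u,g^{-1})$ given by $\phi(a) = ua^{-1}$. To see $\phi$ lands in the target, write $c = au^{-1}$, so that $ua^{-1} = c^{-1}$; then $(ua^{-1})^n = c^{-n} = g^{-1}$ immediately, while the second defining condition $(ua^{-1}u^{-1})^n = g^{-1}$ simplifies to $u a^{-n} u^{-1} = u g^{-1} u^{-1}$, which equals $g^{-1}$ precisely because $g = a^n \in C(u) = C$ commutes with $u$. Crucially, (\ref{FSdef2}) only sums over $g \in C$, so the relevant $g$ automatically satisfies this commutation hypothesis.

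Since $\phi$ is the restriction to $G_n(u,g)$ of the bijection $G \to G$, $a \mapsto ua^{-1}$ (the composition of inversion and left-translation by $u$), it is automatically injective, giving $|G_n(u,g)| \leq |G_n(u,g^{-1})|$. The symmetric argument with the roles of $g$ and $g^{-1}$ interchanged yields the reverse inequality, hence equality. The main obstacle in the whole argument is really just pinning down the right correspondence $\phi$; once one has it, the verification rests on nothing beyond the commutativity $gu = ug$, which is encoded in the restriction $g \in C$.
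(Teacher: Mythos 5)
Your proof is correct and follows essentially the same strategy as the paper's: decompose the class function $\zeta_n$ over the orthonormal basis $I(C)$ and identify the coefficient $\langle\zeta_n,\eta\rangle_C$ with $\nu_n(\chi_\eta)$ via (\ref{FSdef2}). The one place you are more explicit than the paper is in handling the complex conjugate: passing from $|C|^{-1}\sum_{g\in C}\zeta_n(g)\overline{\eta(g)}$ to $|C|^{-1}\sum_{g\in C}\zeta_n(g)\eta(g)$ requires the symmetry $\zeta_n(g)=\zeta_n(g^{-1})$, which the paper's proof leaves tacit and in fact establishes only afterwards as Lemma \ref{z's}, via the same bijection $a\mapsto ua^{-1}$; your verification that $ua^{-1}\in G_n(u,g^{-1})$ (reducing the second defining condition to $g\in C(u)$, automatic since $g=a^n$ commutes with both $a$ and $au^{-1}$) is exactly the content of that lemma, so you have in effect reproved it inside the Proposition and used it to close a small gap the paper glosses over.
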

\begin{proof}\
Because $\zeta_n$ is a class function, there is a decomposition 
$\zeta_n = \sum_{\eta \in I(C)} \alpha_n(\eta)\eta,$
 for scalars $\alpha_n(\eta)$.\ Since $\alpha_n(\eta) = \langle \zeta_n, \eta \rangle_C = |
C|^{-1}\sum_{g\in C} |G_n(u, g)|\eta(g)$, we must  have $\alpha_n (\eta)= \nu_n(\chi_{\eta})$ by  
(\ref{FSdef2}),
 whence (\ref{zetaform}) and (\ref{FSdef4})  follow. 
\end{proof}

\medskip
It will be worthwhile to make explicit the special case  when $u=1$,
corresponding to the original
$n^{th}$  indicators $\nu^G(\eta)$ of the irreducible characters $\eta$ of  $G$.
(Here, $\eta$ is also an irreducible character for $D(G)$ and coincides with
$\chi_{\eta}$). In this case we write
$\varphi_n^G$ in place of $\zeta_n$.\ Then
\begin{eqnarray}
\num \label{zeta'form} \varphi^G_n = \sum_{\eta\in I(G)} \nu^G_n(\eta)\eta,
\end{eqnarray}
and it is well-known that $\varphi^G_n\in R(G)$.\ Indeed by (\ref{FSdef3}),
$\nu^G_n(\eta)  = \langle 1, \lambda^n(\eta) \rangle_G$,
where $1$ is the trivial $1$-dimensional character and $\lambda^n(\eta)$ is the virtual character given by the $n^{th}$ Adams operator applied to $\eta$.\ (Cf.\ \cite[Section 12.B]{CR} for Adams operators.)

\begin{lemma}\label{z's}  There is a bijection of sets
\begin{eqnarray*}
G_n(u, g) \rightarrow G_n(u, g^{-1}),\ \ a \mapsto ua^{-1}.
\end{eqnarray*}
\end{lemma}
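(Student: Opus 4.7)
The plan is to show that the map $\phi \colon a \mapsto ua^{-1}$ carries $G_n(u,g)$ into $G_n(u,g^{-1})$, and then to obtain bijectivity by a symmetry argument together with the observation that $\phi$ is already an honest bijection of the ambient group $G$ (its inverse on $G$ sends $b$ to $b^{-1}u$).

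First I would verify the containment directly. Let $a \in G_n(u,g)$ and set $b = \phi(a) = ua^{-1}$. Two identities must be checked. Since $b = (au^{-1})^{-1}$, we get $b^n = [(au^{-1})^n]^{-1} = g^{-1}$, using the defining relation $(au^{-1})^n = g$. Next, $bu^{-1} = ua^{-1}u^{-1}$ is simply the conjugate of $a^{-1}$ by $u$, so $(bu^{-1})^n = u a^{-n} u^{-1} = u g^{-1} u^{-1}$. At this point I invoke the remark immediately preceding the lemma: any element of $G_n(u)$ has $n$th power in $C = C(u)$, so $g = a^n$ commutes with $u$. Hence $(bu^{-1})^n = g^{-1}$, and therefore $b \in G_n(u, g^{-1})$.

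For bijectivity, I would note that $\phi$ is a bijection on all of $G$, hence injective on every subset. Combined with the inclusion $\phi(G_n(u,g)) \subseteq G_n(u,g^{-1})$, this gives $|G_n(u,g)| \le |G_n(u,g^{-1})|$. Applying the same inclusion with $g^{-1}$ in place of $g$ yields the reverse inequality; equality of cardinalities, together with injectivity, forces $\phi$ to restrict to a bijection between the two finite sets.

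There is no serious obstacle: the only subtle point is the use of the fact that $g$ commutes with $u$, which is precisely the observation already recorded in the text just before the lemma. Everything else is a direct manipulation with the defining relations $(au^{-1})^n = a^n = g$.
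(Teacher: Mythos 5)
Your proof is correct. The containment check is exactly what the paper's ``straightforward to check'' alludes to, including the key observation that $g = a^n$ lies in $C(u)$, which is needed to simplify $u g^{-1} u^{-1}$ to $g^{-1}$. The one place you diverge is in establishing bijectivity: the paper simply exhibits the two-sided inverse $b \mapsto b^{-1}u$ (one checks $(b^{-1}u)(u a^{-1})^{-1}\cdots$, i.e., $b^{-1}u \circ (a\mapsto ua^{-1}) = \mathrm{id}$ and vice versa), whereas you appeal to a symmetry-plus-cardinality argument. Both are valid, and yours is fine since all sets in play are finite, but the explicit inverse is slightly cleaner because it also records \emph{which} bijection goes the other way --- a fact that matters later in the paper when this bijection is noted to be $C(u,g)$-equivariant in the proof of Lemma \ref{L1}. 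You might prefer to just verify directly that $b\mapsto b^{-1}u$ inverts $a\mapsto ua^{-1}$ and maps $G_n(u,g^{-1})$ into $G_n(u,g)$ by the same computation you already carried out.
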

\begin{proof}\ This is straightforward to check.\ The inverse  map is $b\mapsto b^{-1}u$.
\end{proof}

\begin{remark}
 \label{p.real}
The indicators of all representations of $D(G)$ are real. 
\end{remark}
\noindent
Indeed, apply complex conjugation to (\ref{FSdef2}).\ Since $\overline{\eta(y)} =  \eta(y^{-1})$, we see 
$\overline{\nu_n(\chi_{\eta})}= \nu_n(\chi_{\eta})\in\RR$.
(S-H.\ Ng has shown that the indicators of the double $D(H)$ for any $H$ are real \cite{N}; our special case is much easier to prove.) 

\bigskip
We also give a simpler proof  for $D(G)$ of the result of \cite{KSZ} that $\nu_n(V)\in\QQ(\zeta_{e/\gcd(n,e)})$. It will follow that to determine whether $G$ is
an $FSZ$-group, it suffices to consider $\nu_d(V)$ for $d \ | \ e$.

\begin{lemma} \label{gal}
Let $e$ be the exponent of $G$, $n$ a positive integer and $d=\gcd(n,e)$.  For any 
irreducible representation $V$ of $D(G)$,  $\nu_d(V),$ $\nu_n(V)\in \QQ(\zeta_{e/d})$ and  there is $\sigma\in{\rm Gal}(\QQ(\zeta_{e/d})/\QQ)$ depending only on $n$ such that $\nu_n(V)=\sigma(\nu_d(V))$. 

 In particular, to show all indicators of $V$ are integers it is enough to show that 
 $\nu_d(V)\in\ZZ$ for all $d \ | \  e$.
\end{lemma}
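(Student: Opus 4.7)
My plan is to work directly from formula (\ref{FSdef3}), $\nu_n(\chi_\eta)=|C|^{-1}\sum_{a\in G_n(u)}\eta(a^n)$, and reduce the whole assertion to a single identity about the sets $G_n(u)$. Set $q=e/d$ and $m=n/d$. I need three ingredients: that the character values appearing in (\ref{FSdef3}) lie in $\QQ(\zeta_q)$, a choice of Galois element $\sigma$, and the equality $G_n(u)=G_d(u)$.

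For the first, a $p$-adic valuation check suffices: since $v_p(\mathrm{ord}(a))\le v_p(e)$ for any $a\in G$, one computes $v_p(\mathrm{ord}(a^n))=\max(0,v_p(\mathrm{ord}(a))-v_p(n))\le \max(0,v_p(e)-v_p(n))=v_p(q)$, so $\mathrm{ord}(a^n)\mid q$ and $\eta(a^n)\in\QQ(\zeta_q)$. By (\ref{FSdef3}) this gives $\nu_n(V)\in\QQ(\zeta_q)$, and the same argument with $d$ in place of $n$ (using $d\mid e$) handles $\nu_d(V)$. A similar prime-by-prime check shows $\gcd(m,q)=1$: if $v_p(m)>0$ then $v_p(n)>v_p(d)=\min(v_p(n),v_p(e))$, forcing $v_p(e)=v_p(d)$, i.e., $v_p(q)=0$. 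Consequently $\sigma:\zeta_q\mapsto\zeta_q^m$ is a well-defined element of $\mathrm{Gal}(\QQ(\zeta_q)/\QQ)$ depending only on $n$ (with $e$ fixed by $G$), and the standard action of Galois on characters gives $\sigma(\eta(x))=\eta(x^m)$ whenever $\mathrm{ord}(x)\mid q$.

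The heart of the argument --- and the place I expect to be the main obstacle if approached without the right trick --- is the identity $G_n(u)=G_d(u)$. The inclusion $G_d(u)\subseteq G_n(u)$ is trivial: if $b=au^{-1}$ satisfies $b^d=a^d$, raise to the $m$th power. The reverse uses B\'ezout: write $d=\alpha n+\beta e$. Every element of $G$ has $x^e=1$, so for $a\in G_n(u)$ with $b=au^{-1}$,
\begin{eqnarray*}
a^d=(a^n)^\alpha(a^e)^\beta=(a^n)^\alpha=(b^n)^\alpha=(b^n)^\alpha(b^e)^\beta=b^d,
\end{eqnarray*}
whence $a\in G_d(u)$. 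Without this descent from exponent $n$ to exponent $d$, there is no obvious way to compare the two sums term by term.

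Given the three ingredients, the proof closes in one line: since $a^d$ has order dividing $q$ for $a\in G_d(u)$,
\begin{eqnarray*}
\sigma(\nu_d(\chi_\eta))=|C|^{-1}\sum_{a\in G_d(u)}\sigma(\eta(a^d))=|C|^{-1}\sum_{a\in G_n(u)}\eta((a^d)^m)=|C|^{-1}\sum_{a\in G_n(u)}\eta(a^n)=\nu_n(\chi_\eta).
\end{eqnarray*}
The ``in particular'' assertion is then immediate: if $\nu_d(V)\in\ZZ$ for every $d\mid e$, then for any $n$ the value $\nu_n(V)=\sigma(\nu_{\gcd(n,e)}(V))$ is Galois-fixed in $\QQ(\zeta_q)$ and hence a rational integer.
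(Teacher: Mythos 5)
Your proof is correct and follows essentially the same route as the paper's: reduce via $G_n(u)=G_d(u)$ (proved by the same B\'ezout computation), observe the relevant character values lie in $\QQ(\zeta_{e/d})$, and pass to $\nu_n$ by the Galois automorphism $t\mapsto t^{n/d}$. The only differences are cosmetic --- you verify $\gcd(n/d,e/d)=1$ and $\mathrm{ord}(a^n)\mid e/d$ by explicit $p$-adic valuation checks, whereas the paper treats these as standard.
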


\begin{proof}
We use the formula in equation (\ref{FSdef3})  for the indicators of a general irreducible representation of $D(G)$. Let $d=\alpha n+\beta e$; we note that $h^n=(hu^{-1})^n$ implies $h^d=h^{n \alpha+e\beta} = (h^n)^\alpha =(hu^{-1})^{n\alpha}= (hu^{-1})^{n\alpha+e\beta}=(hu^{-1})^d$. Hence $G_n(u)=G_d(u)$. So $\sum\limits_{h\in G_n(u)}\eta(h^n)=\sum\limits_{h\in G_d(u)}\eta((h^d)^q)$, where $n=dq$; moreover, $h^d=(h^n)^\alpha\in C(u)$. Since $(h^d)^{e/d}=1$, we see that $\eta(h^d)\in \QQ(\zeta_{e/d})$, since it is 
the trace of a matrix of order $e/d$. Since $\gcd(e/d,q)=1$, $\sigma=\{ t\mapsto t^q\} \in{\rm Gal}(\QQ(\zeta_{e/d})/\QQ)$, and we obviously have $\sigma(\eta(h^d))=\eta(h^n)$. Therefore, $\sigma(\nu_d(V))=\nu_n(V)$. 
\end{proof}

This fact was used in \cite{C} and in some of our computations in Section 7.
\bigskip

\section{Characterizations of $FSZ$-groups}\label{s2}

In this Section we give various characterizations of $FSZ$-groups.\ We also introduce the idea 
of $FSZ^+$-groups; these are special kinds of $FSZ$-groups that are very
natural and are better behaved from a group-theoretic perspective (cf.\ Theorem \ref{thmcent1}).\
It will turn out that for most of the examples considered in the following Sections, it is the stronger
$FSZ^+$ property that we will establish.\ Indeed, we do not know of a group that is
$FSZ$ but \emph{not} $FSZ^+$.

\medskip
We begin with some character-theoretic characterizations.\ We fix some notation which is standard.\ Let $H$ be a subgroup of $C=C(u)$. If $\eta$ is a character of $C$, then $Res^C_H(\eta)$ is the 
restriction of $\eta$ to $H$; conversely for $\la$ a character of $H$, $Ind^C_H(\la)$ denotes the induced character 
from $H$ to $C$. 

\begin{theorem}\label{thmequivs} Let $n$ be a positive integer.\ The following are equivalent.
\begin{eqnarray*}
&&(1)\ \  \nu_n(\chi_{\eta}) \in \mathbb{Z}\ \mbox{for each}\ \eta\in I(C), \\
&&(2)\ \ \zeta_n \in R(C), \\
&&(3)\ \ \langle Res^C_{H}\zeta_n, \la \rangle \in \mathbb{Q}\ \mbox{for each cyclic}\ H\subseteq C\ \mbox{and}\ \la\in I(H), \\
&&(4)\ \ \zeta_n\ \mbox{is constant on rational conjugacy classes of $C$}.
\end{eqnarray*}
\end{theorem}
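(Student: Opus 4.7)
The plan is to establish the cycle $(1)\Leftrightarrow(2)\Leftrightarrow(3)$ together with the separate equivalence $(1)\Leftrightarrow(4)$, using throughout the fact (from \cite{KSZ}, recalled in Lemma \ref{gal}) that each $\nu_n(\chi_\eta)$ is an algebraic integer in $\QQ(\zeta_e)$. The equivalence $(1)\Leftrightarrow(2)$ is immediate from (\ref{zetaform}): $R(C)$ is the $\ZZ$-span of $I(C)$, and an algebraic integer is a rational integer iff it is rational, so $\zeta_n = \sum_\eta \nu_n(\chi_\eta)\eta\in R(C)$ iff every coefficient $\nu_n(\chi_\eta)$ lies in $\ZZ$.

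For $(2)\Leftrightarrow(3)$ the key tool is Artin's induction theorem: $R(C)\otimes\QQ$ is spanned over $\QQ$ by the induced characters $Ind^C_H(\la)$ as $H$ ranges over cyclic subgroups of $C$ and $\la$ over $I(H)$. By Frobenius reciprocity, condition (3) says exactly that $\lan \zeta_n, Ind^C_H(\la)\ran_C\in\QQ$ for every such pair, which is therefore equivalent to $\zeta_n\in R(C)\otimes\QQ$, i.e.\ to every $\nu_n(\chi_\eta)\in\QQ$. Algebraic integrality of the coefficients upgrades $\QQ$ to $\ZZ$, yielding (2).

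The most subtle step is $(1)\Leftrightarrow(4)$, for which I would exploit the Galois action of ${\rm Gal}(\QQ(\zeta_e)/\QQ)\cong(\ZZ/e\ZZ)^*$: for $m$ coprime to $e$, the automorphism $\s_m$ satisfies $\s_m(\eta(g))=\eta(g^m)$ for every $g\in C$ (since $|g|\mid e$). Because $\zeta_n$ is integer-valued by construction, $\s_m$ fixes $\zeta_n(g)$ pointwise. Applying $\s_m$ to $\zeta_n(g)=\sum_\eta c_\eta\eta(g)$ with $c_\eta:=\nu_n(\chi_\eta)$ gives $\zeta_n(g)=\sum_\eta \s_m(c_\eta)\eta(g^m)$, while directly $\zeta_n(g^m)=\sum_\eta c_\eta\eta(g^m)$. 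If (1) holds then $\s_m(c_\eta)=c_\eta$, so $\zeta_n(g)=\zeta_n(g^m)$ and (4) follows. Conversely, (4) forces the two sums to agree, giving $\sum_\eta(c_\eta-\s_m(c_\eta))\eta(g^m)=0$; since the map $g\mapsto g^m$ is a bijection on $C$ (its inverse is $g\mapsto g^{m'}$ where $mm'\equiv1\pmod e$), the expression $\sum_\eta(c_\eta-\s_m(c_\eta))\eta(h)$ vanishes for all $h\in C$, so by linear independence of irreducible characters $c_\eta=\s_m(c_\eta)$ for every $m$ coprime to $e$. Hence each $c_\eta\in\QQ$, and algebraic integrality then yields $c_\eta\in\ZZ$.

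I expect the final Galois manipulation to be the technically most delicate step: one must track carefully how $\s_m$ transforms the coefficients versus the character values, and then combine the bijectivity of the $m$-th power map on $C$ with the linear independence of irreducibles to extract Galois-invariance of each $c_\eta$. The remaining equivalences reduce to Artin induction plus Frobenius reciprocity, which are entirely standard.
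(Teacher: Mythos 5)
Your argument is correct, and for the chain $(1)\Leftrightarrow(2)$, $(2)\Rightarrow(3)$, $(3)\Rightarrow(1)$ it follows essentially the same route as the paper: read $(1)\Leftrightarrow(2)$ off (\ref{zetaform}), note $(2)\Rightarrow(3)$ trivially, and deduce $(3)\Rightarrow(1)$ from Artin induction together with Frobenius reciprocity and the algebraic integrality of the $\nu_n(\chi_\eta)$. The genuine difference is in $(1)\Leftrightarrow(4)$. The paper dispatches this step by citing Serre \cite{S} (Proposition 34 and Theorem 29) for the fact that a rational-valued class function is constant on rational conjugacy classes if and only if it is a $\QQ$-linear combination of rational characters, and then translates that into rationality of the coefficients in (\ref{zetaform}). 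You instead re-derive the needed equivalence directly from the action of ${\rm Gal}(\QQ(\zeta_e)/\QQ)$: you use that $\zeta_n$ is integer-valued (hence Galois-fixed pointwise), that $\s_m(\eta(g))=\eta(g^m)$, that $g\mapsto g^m$ is a bijection on $C$ when $\gcd(m,e)=1$, and the linear independence of $I(C)$ to conclude $\s_m(c_\eta)=c_\eta$ for all such $m$, whence $c_\eta\in\QQ$ and, by integrality, $c_\eta\in\ZZ$. This is a clean, self-contained substitute for the Serre citation; the underlying mechanism (Galois descent for cyclotomic character values) is the same as what powers the cited results, so you gain self-containment at the modest cost of unpacking a standard lemma. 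Both arguments are fully rigorous.
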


\begin{proof}\ The equivalence of (1) and (2) is an immediate consequence of
(\ref{zetaform}).\ Moreover, if $\zeta_n \in R(C)$ then 
$Res^C_H\zeta_n \in R(H)$ for any subgroup $H\subseteq C$, and therefore
$\langle Res^C_{H}\zeta_n, \la \rangle \in \mathbb{Z}$ for $\lambda\in I(H)$.\ In particular,
(2)$\Rightarrow$(3).

\medskip
Let $\eta\in I(C)$.\ By Artin's theorem, there are cyclic subgroups $H_{i, \eta}\subseteq C$, 
$\la_{i, \eta} \in I(H_{i, \eta})$ and $c_{i, \eta}\in \mathbb{Q}$ such that $\eta= \sum_i c_{i, \eta} Ind^C_{H_{i, \eta}}
(\la_{i, \eta})$.\ Using (\ref{FSdef3}),  we have
\begin{eqnarray*}
\nu_n(\chi_{\eta}) =\langle \zeta_n, \eta\rangle_C = \sum_i c_{i, \eta} \langle \zeta_n, Ind_{H_{i, \eta}}^C (\la_{i, \eta}) \rangle_C 
= \sum_i c_{i, \eta}\langle Res^C_{H_{i, \eta}}(\zeta_n), \la_{i, \eta} \rangle_{H_{i, \eta}},
\end{eqnarray*}
where the third equality uses Frobenius Reciprocity \cite[7.2]{S}.\
Since $\nu_n(\chi_{\eta})$ is an algebraic integer, it follows that a sufficient condition for
$\nu_n(\chi_{\eta})\in \mathbb{Z}$ is that for each $i$ we have 
$\langle Res^C_{H_{i, \eta}}(\zeta_n), \la_{i, \eta}\rangle_{H_{i, \eta}}\in \mathbb{Q}$.\ Hence,
 (3)$\Rightarrow$(1).

\medskip
We have seen that $\zeta_n$ is a rational-valued class function on $C$.\
Then it is well-known
(\cite{S}, Proposition 34 and Theorem 29) that (4) holds if, and only if,
$\zeta_n$ is a $\mathbb{Q}$-linear combination of 
(rational) characters of $C$.\ This in turn is equivalent to the statement
$\nu_n(\chi_{\eta})\in \mathbb{Q}$ for $\eta\in I(C)$ thanks to (\ref{zetaform}), and because 
each $\nu_n(\chi_{\eta})$ is an algebraic integer,  this is equivalent to (1).\ The Theorem is proved.
\end{proof} 

The following consequence is fundamental for later results.
\begin{corollary} \label{corequiv} Let $n$ be a positive integer.\
Then $G$ is an $FSZ_n$-group if, and only if, for 
all commuting pairs of elements $u, g$ and all integers $m$ with $gcd(m, |G|)=1$, we have
\begin{eqnarray}\num\label{gpeqns} |G_n(u, g)| = |G_n(u, g^m)|.
\end{eqnarray}
\end{corollary}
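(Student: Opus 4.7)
The plan is to deduce the corollary from the equivalence $(1)\Leftrightarrow(4)$ of Theorem \ref{thmequivs}, together with the parametrization of the irreducible $D(G)$-modules by pairs $(u,\eta)$ from \cite{KMM} recalled in Section 2. By definition, $G$ is $FSZ_n$ iff $\nu_n(\chi_\eta)\in\ZZ$ for every irreducible $D(G)$-character; this ranges exactly over pairs consisting of a conjugacy class representative $u$ and an $\eta\in I(C(u))$. Hence $G$ is $FSZ_n$ iff, for every $u\in G$, condition (1) of Theorem \ref{thmequivs} holds for $C=C(u)$, which by the theorem is equivalent to $\zeta_n^{C(u)}$ being constant on the rational conjugacy classes of $C(u)$ for every $u$.

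Next, I would unpack ``constant on rational conjugacy classes.'' Since $\zeta_n^{C(u)}$ is already a class function on $C(u)$, the only new content is invariance under the Galois/power action: one needs $\zeta_n^{C(u)}(g)=\zeta_n^{C(u)}(g^m)$ for all $g\in C(u)$ and all $m$ coprime to $|g|$. Unwinding the definition of $\zeta_n^{C(u)}$ gives exactly the equation $|G_n(u,g)|=|G_n(u,g^m)|$ in (\ref{gpeqns}), and the quantifier ``$u\in G$ and $g\in C(u)$'' is the same as ``commuting pairs $u,g$.'' (One can freely let $u$ range over all of $G$ rather than just a set of class representatives, because conjugating $u$ transports $\zeta_n$ and $G_n(u,\cdot)$ compatibly; the validity of the condition depends only on the class of $u$.)

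The remaining step is the trivial replacement of the condition $\gcd(m,|g|)=1$ by the more uniform $\gcd(m,|G|)=1$ appearing in the corollary. The implication $\gcd(m,|G|)=1\Rightarrow\gcd(m,|g|)=1$ is immediate, and conversely, given $m$ coprime to $|g|$, the Chinese Remainder Theorem produces $m'\equiv m\pmod{|g|}$ with $\gcd(m',|G|)=1$; since $g^{m'}=g^m$, the two families of equations are identical.

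No genuine obstacle arises: the corollary is essentially a translation of Theorem \ref{thmequivs}(4) into the combinatorial language of the counting functions $|G_n(u,g)|$, aggregated over all centralizers. The only mildly subtle point is the CRT step reconciling the two formulations of ``$m$ coprime to the relevant order,'' and the observation that the condition is intrinsic to the conjugacy class of $u$ so that one may quantify over arbitrary $u\in G$.
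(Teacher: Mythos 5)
Your argument is correct and follows the same route as the paper, which dismisses the corollary in one line as ``an explicit version of Theorem~\ref{thmequivs}(4)''; you have simply spelled out that translation. The two elaborations you supply---that the condition depends only on the conjugacy class of $u$ (so one may quantify over all $u\in G$), and the CRT step reconciling $\gcd(m,|g|)=1$ with $\gcd(m,|G|)=1$---are exactly the implicit bookkeeping the paper is leaving to the reader.
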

\begin{proof} This is just an explicit version of Theorem \ref{thmequivs}(4).
\end{proof}

We give two immediate applications of (\ref{gpeqns}).

\begin{lemma}\label{power} Let $n$ be a positive integer, and suppose that
 $G_n(u)$ is closed 
with respect to all power maps $a\mapsto a^m,$ for $gcd(m, |G|)=1$.\ Then (1)-(4)
of Theorem \ref{thmequivs} hold.
\end{lemma}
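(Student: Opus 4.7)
The plan is to verify condition (\ref{gpeqns}) of Corollary \ref{corequiv}, which is equivalent to the conclusion we want. So I fix a commuting pair $u, g$ and an integer $m$ with $\gcd(m, |G|) = 1$, and aim to show $|G_n(u,g)| = |G_n(u, g^m)|$.

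The key idea is that, under the hypothesis of the lemma, the power map $\phi_m : a \mapsto a^m$ restricts to a bijection between the level sets $G_n(u, g)$ and $G_n(u, g^m)$. First I would check that $\phi_m$ sends $G_n(u,g)$ into $G_n(u, g^m)$: if $a \in G_n(u,g) \subseteq G_n(u)$, then the hypothesis that $G_n(u)$ is closed under $m$-th powers gives $a^m \in G_n(u)$, i.e.\ $(a^m u^{-1})^n = (a^m)^n$; and the latter equals $(a^n)^m = g^m$, so $a^m \in G_n(u, g^m)$.

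Next I would argue bijectivity. Since $\gcd(m, |G|) = 1$, choose $m'$ with $mm' \equiv 1 \pmod{|G|}$, so $a^{mm'} = a$ for every $a \in G$. By the previous paragraph applied with $m'$ in place of $m$ (and noting $(g^m)^{m'} = g$), the map $\phi_{m'} : G_n(u, g^m) \to G_n(u, g)$ is well defined, and $\phi_{m'} \circ \phi_m$ and $\phi_m \circ \phi_{m'}$ are the identity on the respective sets. Hence $\phi_m$ is a bijection and $|G_n(u,g)| = |G_n(u, g^m)|$, as required.

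There is really no substantive obstacle here; the only thing to be slightly careful about is that $g \in C(u)$ (which holds because $g = a^n$ for some $a \in G_n(u)$, and $a^n$ always centralizes $u$, as noted just before equation (\ref{FSdef2})), so that the hypothesis of Corollary \ref{corequiv} on commuting pairs is indeed satisfied.
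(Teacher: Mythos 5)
Your proof is correct and coincides with the paper's second (``alternatively\dots'') argument: the hypothesis makes the power map $a\mapsto a^m$ a bijection $G_n(u,g)\to G_n(u,g^m)$, giving $|G_n(u,g)|=|G_n(u,g^m)|$ and hence condition (4) of Theorem~\ref{thmequivs}. The paper also records a quicker variant via (\ref{FSdef3}): closure of $G_n(u)$ under coprime powers makes the sum $\sum_{a\in G_n(u)}\eta(a^n)$ fixed by the Galois group of $\QQ(\zeta_{|G|})/\QQ$, so $\nu_n(\chi_\eta)$ is a rational algebraic integer and hence in $\ZZ$.
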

\begin{proof} Using (\ref{FSdef3}), 
the hypotheses of the Lemma imply that $\nu_n(\eta)$ is invariant
under all Galois automorphisms of the cyclotomic field of order $|G|$ for each
$\eta\in I(C)$.\ It is therefore
rational, and hence integral, as required.\ Alternatively, the assumptions of the Theorem
state that
$a^n=(au^{-1})^n$ implies $a^{mn}=(a^mu^{-1})^n$ whenever $(m, |G|)=1$.
Then (\ref{gpeqns}) holds because the map $x\mapsto x^m$ is bijective on $G$ whenever 
$gcd(m, |G|)=1.$

\end{proof}

\begin{lemma}\label{lemmacent} Let $n$ be a positive integer, and suppose that the centralizer of every element of $G$ of order \emph{not} $1, 2, 3, 4$ or $6$ 
is $FSZ_n$.\ Then $G$ is $FSZ_n$.
\end{lemma}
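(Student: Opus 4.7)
The plan is to verify the criterion of Corollary \ref{corequiv}: fix commuting elements $u, g \in G$ and an integer $m$ with $gcd(m, |G|) = 1$, and show that $|G_n(u, g)| = |G_n(u, g^m)|$. I would split the argument according to the order $d$ of $g$.

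When $d \in \{1, 2, 3, 4, 6\}$, the group of units $(\ZZ/d\ZZ)^*$ is either trivial or cyclic of order two generated by $-1$, so $g^m$ must equal $g$ or $g^{-1}$. In the first subcase the required equality is tautological, and in the second it is immediate from the bijection $a \mapsto ua^{-1}$ furnished by Lemma \ref{z's}. No hypothesis on $G$ is needed here.

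When $d \notin \{1, 2, 3, 4, 6\}$, I would exploit the $FSZ_n$ property of $C(g)$, which is available by assumption, via three simple reductions. First, $a \in G_n(u, g)$ forces $a$ to commute with $g = a^n$, so $G_n(u, g) = G_n^{C(g)}(u, g)$, and similarly for $g^m$. Second, $gcd(m, d) = 1$ makes $g$ a power of $g^m$, hence $C(g) = C(g^m)$. Third, $u \in C(g)$ since $u$ commutes with $g$, and $gcd(m, |C(g)|) = 1$ because $|C(g)|$ divides $|G|$. Applying Corollary \ref{corequiv} inside the $FSZ_n$-group $C(g)$, to the commuting pair $u, g$, then yields exactly the desired identity $|G_n^{C(g)}(u, g)| = |G_n^{C(g)}(u, g^m)|$.

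The only point that requires any thought is recognizing that $\{1, 2, 3, 4, 6\}$ is precisely the set of $d \geq 1$ for which $(\ZZ/d\ZZ)^* \subseteq \{\pm 1\}$; beyond that, the lemma is essentially a direct amalgamation of Corollary \ref{corequiv} with Lemma \ref{z's}, so I anticipate no genuine obstacle.
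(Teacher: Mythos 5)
Your proof is correct and follows essentially the same route as the paper: split on whether $g$ has order in $\{1,2,3,4,6\}$ (handled by Lemma \ref{z's}) or not (handled by applying Corollary \ref{corequiv} inside $C(g)$). You spell out a few details the paper leaves implicit, such as $G_n(u,g^m)\subseteq C(g^m)=C(g)$ and $\gcd(m,|C(g)|)=1$, but the argument is the same.
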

\begin{proof} We show that (\ref{gpeqns}) holds for every pair of commuting elements
$u, g\in G$.\ Suppose first that  $g$ has order $1, 2, 3, 4$ or $6$.\
Then $g^m=g^{\pm 1}$, and (\ref{gpeqns}) follows from Lemma
\ref{z's}.\ Now suppose that the order of $g$ is $5$ or at least $7$. If $gcd(m, |G|)=1$ then 
$u\in C(g)$ and $G_n(u, g), G_n(u, g^m)\subseteq C(g)$.\ Therefore, $|G_n(u, g)|=|G_n(u, g^m)|$
by Corollary \ref{corequiv} because $C(g)$ is an $FSZ_n$-group
by hypothesis.\ This proves the Lemma.
\end{proof}

To facilitate further applications of Corollary \ref{corequiv}, we introduce the idea of
$FSZ^+$-groups.\ To this end, notice that the subgroup $C_G(u, g)\subseteq G$ that commutes with both
$u$ and $g$ acts by conjugation on $G_n(u, g^m)$.\ We can thus consider two properties
that qualitatively strengthen (\ref{gpeqns}): 
\begin{eqnarray}
 &&\ \ \ \  \mbox{$G_n(u, g)$ and $G_n(u, g^m)$ are \emph{isomorphic} $C_G(u, g)$-sets}.\num \label{Gniso}\\
&&\ \ \ \ \mbox{$G_n(u, g)$ and $G_n(u, g^m)$ afford \emph{isomorphic permutation modules}
for $C_G(u, g)$.} \num \label{Gnpermiso}
\end{eqnarray}

 \noindent
{\bf Remark}.\ It is well-known that (\ref{Gnpermiso}) is a weaker condition than (\ref{Gniso}).\
That is, for a group $H$, isomorphic $H$-sets afford isomorphic permutation modules
for $H$; but the converse is generally false.

\medskip
We call $G$ an $FSZ^+_n$-group if (\ref{Gnpermiso}) holds for all commuting pairs
of elements $u, g\in G$ and all integers $m$ with $gcd(m, |G|)=1$.\ Thus an $FSZ^+_n$-group
is necessarily an $FSZ_n$-group.\ We say that $G$ is an $FSZ^+$-group if it is $FSZ^+_n$ for all $n$.
Such a group is necessarily an $FSZ$-group.\

\begin{theorem}\label{thmcent1} Let $n$ be a positive integer.\ The following are equivalent:
\begin{eqnarray*}
&&(a)\ \mbox{$G$ is an $FSZ^+_n$-group}, \\
&&(b)\ \mbox{The centralizer of every nonidentity element in $G$ is an $FSZ_n$-group}.
\end{eqnarray*}
\end{theorem}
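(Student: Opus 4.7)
The plan is to reformulate the $FSZ^+_n$ condition on $G$ purely as a statement about sizes of centralizer-slices of the sets $G_n(u,g)$, after which (a) and (b) become near-mirror images. I use the standard fact that two permutation $\CC[H]$-modules on $H$-sets $X$ and $Y$ are isomorphic iff they have the same character, i.e., iff $|X^h|=|Y^h|$ for each $h\in H$. A direct check shows that for $h\in C_G(u,g)$ conjugation by $h$ preserves $G_n(u,g)$, and its fixed-point set is $G_n(u,g)\cap C_G(h)=G_n^{C_G(h)}(u,g)$ (using only that $u,g\in C_G(h)$). Consequently $G$ is $FSZ^+_n$ if and only if
\[
|G_n^{C_G(h)}(u,g)|=|G_n^{C_G(h)}(u,g^m)|
\]
for every commuting pair $u,g\in G$, every $m$ with $\gcd(m,|G|)=1$, and every $h\in C_G(u,g)$.

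Granting this, $(b)\Rightarrow(a)$ is immediate: for $h\neq 1$ the centralizer $C_G(h)$ is $FSZ_n$ by hypothesis, so the displayed equality follows by applying Corollary~\ref{corequiv} inside $C_G(h)$ (noting that $u,g$ commute there and $\gcd(m,|C_G(h)|)=1$), and for $h=1$ the equation asserts that $G$ itself is $FSZ_n$, which is a consequence of Lemma~\ref{lemmacent} since (b) in particular supplies $FSZ_n$ centralizers for all elements of order $5$ or at least $7$.

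For $(a)\Rightarrow(b)$, I fix a nonidentity $x\in G$ and set $K=C_G(x)$. By Corollary~\ref{corequiv} applied to $K$ it suffices, given commuting $u',g'\in K$ and $m'$ with $\gcd(m',|K|)=1$, to check $|G_n^K(u',g')|=|G_n^K(u',g'^{m'})|$. Since $x\in C_G(u',g')$, the identification above gives $G_n^K(u',g')=G_n(u',g')^x$, and similarly on the $g'^{m'}$ side. The mild nuisance is that $FSZ^+_n$ is formulated with $m$ coprime to $|G|$, while here $m'$ is only coprime to $|K|$; I would resolve this by using the Chinese Remainder Theorem to choose an $m$ with $\gcd(m,|G|)=1$ and $m\equiv m'\pmod{\mathrm{ord}(g')}$ (possible because $\mathrm{ord}(g')\mid|K|$ forces $\gcd(m',\mathrm{ord}(g'))=1$), so that $g'^m=g'^{m'}$; then (a) applied at $h=x$ delivers the needed equality. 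This last coprimality shift is the only step that requires any real care---everything else is direct bookkeeping once the fixed-point reformulation is set up.
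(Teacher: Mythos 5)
Your proof is correct and follows essentially the same route as the paper: reformulate (\ref{Gnpermiso}) via permutation characters as an equality of fixed-point counts $|G_n(u,g)\cap C(h)|=|G_n(u,g^m)\cap C(h)|$ for $h\in C(u,g)$, identify the fixed-point set as $G_n^{C(h)}(u,g)$, and then read off both implications using Corollary~\ref{corequiv} (applied inside $C(h)$) and Lemma~\ref{lemmacent} (to handle the $h=1$ case). One place where you are actually more careful than the paper is the coprimality reindexing in $(a)\Rightarrow(b)$: the $FSZ^+_n$ hypothesis hands you the fixed-point equality only for $m$ coprime to $|G|$, while Corollary~\ref{corequiv} applied to $K=C_G(x)$ asks for all $m'$ coprime to $|K|$, and your Chinese Remainder Theorem step --- choosing $m$ with $\gcd(m,|G|)=1$ and $m\equiv m' \pmod{\mathrm{ord}(g')}$ so that $g'^m=g'^{m'}$ --- is exactly what bridges the gap. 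The paper's chain of equivalences invokes Corollary~\ref{corequiv} ``with $G$ replaced by $C(h)$'' without commenting on this, so it is a genuine (if minor) point of rigor that you have made explicit. The rest of your argument matches the paper's line for line.
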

\begin{proof} Condition (\ref{Gnpermiso}) means that each $h \in C(u, g)$ leaves invariant
(i.e., commutes with) the \emph{same number} of elements in both $G_n(u, g)$ and $G_n(u, g^m)$.
Thus, we have
\begin{eqnarray*}
 G \text{ is } FSZ^+_n \ 
 \Leftrightarrow&&  |G_n(u, g) \cap C(h)| = |G_n(u, g^m)\cap C(h)|\ (h \in C(u, g), ug=gu) \\
  \Leftrightarrow&&  |G^{C(h)}_n(u, g)| = |G^{C(h)}_n(u, g^m)|\ (\langle h, u, g\rangle\ \mbox{abelian}) \\
    \Leftrightarrow&&  |G^{C(h)}_n(u, g)| = |G^{C(h)}_n(u, g^m)|\ (u, g \in C(h), ug=gu) \\
    \Leftrightarrow&& C(h)\ \mbox{is $FSZ_n$}\ (h\in G) \\
      \Leftrightarrow&& C(h)\ \mbox{is $FSZ_n$}\ (1\not= h\in G).
    \end{eqnarray*}
   Here, we applied Corollary  \ref{corequiv} (with $G$ replaced by $C(h)$) to get the penultimate equivalence, and  Lemma \ref{lemmacent} to get the last equivalence.      \end{proof}

\medskip

\section{Some classes of $FSZ$-groups}\label{ssexamples}

We start by applying Theorem \ref{thmcent1} to show that various families of groups $G$ are $FSZ^+$.\

\begin{example} {\rm \ {\bf PSL$_2$(q)}.\ 
Abelian groups are obviously $FSZ$, and dihedral groups are too \cite{K}.\ Because
$PSL_2(q)\ (q\geq 2$ a prime power) has the property that every nonidentity element has an
abelian or dihedral centralizer, we can conclude that $PSL_2(q)$ is an $FSZ^+$-group.\
($PSL_2(q)$ simple if $q\geq 4$.)} 
\end{example}

\begin{example} \label{CA} {\rm  {\bf CA-groups}.\
$G$ is a $CA$-$group$ if the centralizer of every nonidentity
element is \emph{abelian}.\ By Theorem \ref{thmcent1}, every $CA$-group is
$FSZ^+$.\ With more effort, we can give explicit formulas  for, and interpretations of, the indicators in this case.
}\end{example}

First we remark that if $g \neq 1$, then $G_n(u, g)=\{a\in C\ | \ a^n=g \}$, and if $G_n(u, g)\neq \phi$, then $u^n = 1.$ This holds since if $a \in G_n(u, g)$, then $au^{-1}$ and $a$ commute with $g$, hence with each other.\ Thus $a\in C, (au^{-1})^n=a^nu^{-n}=a^n$, and $u^n=1$.  

\medskip \noindent
Case 1.\ $u^n\not= 1$.\ In this case, the remark shows that
$G_n(u, g)= \phi$ if $g\not= 1$.\ Thus, $\zeta_n(g)=0$ for $g\not= 1$, and of course we also have
$\zeta_n(1) = |G_n(u, 1)|$.\ Next, $C=C(u)$ acts by conjugation on $G_n(u, 1)$, and by definition the \emph{permutation character} $\pi$ of $C$ furnished by $G_n(u, 1)$ satisfies
$\pi(g) = |\{x\in G_n(u, 1)\ | \ gxg^{-1}=x\}|$ for $g\in C$. 

We assert that $\zeta_n=\pi.$ 
Certainly $\pi(1)=|G_n(u, 1)|$.\ Furthermore, if $1\not= g\in C$ and $x\in G_n(u, 1)$, then $x$ and $u$ both commute with $g$, hence with each other.\ But then
$(xu^{-1})^n=x^n$ implies that $u^n=1$, contradiction.\ So there is no such $x$, whence
$\pi(g)=0$ for $g\not= 1$.\ Having shown that $\zeta_n(g)=\pi(g)$ for all $g\in C$, our assertion is proved.

\medskip
By a standard (and straightforward) result, a character $\mu$ of
$C$ that vanishes away from the identity element is necessarily a multiple of the
\emph{regular representation} $\rho_C$ of $C$, and the multiplicity is
$\mu(1)/|C|$.\ Since this applies with $\mu=\zeta_n$, we have $\zeta_n = (\zeta(1)/|C|)\rho_C$.\
But the decomposition of the regular representation into irreducibles is standard, namely
$\rho_C = \sum_{\eta\in I(C)}\eta(1)\eta$.\ Putting all of this together, we find that
\begin{eqnarray*}
\zeta_n = \frac{|G_n(u, 1)|}{|C|}\sum_{\eta\in I(C)}\eta(1)\eta.
\end{eqnarray*}
Then comparison with (\ref{zetaform}) shows that
\begin{eqnarray*}
\nu_n(\chi_{\eta})=\frac{|G_n(u, 1)|\eta(1)}{|C|}.
\end{eqnarray*}
So either the indicators $\nu_n(\chi_{\eta})$ are all \emph{positive}, or else $G_n(u, 1)=\phi$ and they 
are all $0$.

\medskip \noindent
Case 2.\ $u^n=1$.\ In this case $C\subseteq G_n(u)$, but once this is taken into account
 the arguments of Case 1 carry over.\ Thus by the previous remark, $\zeta_n(g) = |\{a \in C\ | \ a^n=g\}|$ when 
 $g \not= 1$.\ Comparing this with (\ref{zeta'form}), we see that $\zeta_n$ and $\varphi^C_n$ agree away 
 from the identity, so that $\zeta_n-\varphi^C_n$ is a multiple of $\rho_C$.\ The multiple in question is
\begin{eqnarray*}
(\zeta_n(1)-\varphi^C_n(1))/|C|= (|G_n(u, 1)|-|\{a\in C \ |\ a^n=1\}|)/|C|,
\end{eqnarray*}
the number of
orbits of the conjugation action of $C$ on $G_n(u, 1)\setminus{\{a\in C \ |\ a^n=1\}}$.
As before, this leads to
\begin{eqnarray*}
\zeta_n-\varphi^C_n =\frac{|G_n(u, 1)|-|\{a\in C \ |\ a^n=1\}|}{|C|}\sum_{\eta\in I(C)} \eta(1)\eta, 
\end{eqnarray*}
and
\begin{eqnarray*}
\nu_n(\chi_{\eta}) &=& \nu^C_n(\eta) + \frac{\eta(1)(|G_n(u, 1)|-|\{a\in C \ |\ a^n=1\}|)}{|C|}.
\end{eqnarray*}

\medskip

\begin{example}\label{reg} {\rm {\bf Regular $p$-groups}.\ Fix a prime $p$.\ For an introduction
to \emph{regular} $p$-groups, see \cite{H}, Chapter 12.4.\
All we need to know is that in a regular $p$-group,
the equation $a^k=b^k$ holds if,  and only if,  $(ab^{-1})^k=1$
for $k\in \mathbb{Z}$ \cite[Theorem 12.4.4]{H}.\ We show that a regular $p$-group $G$ is $FSZ^+$.}

\end{example}

\medskip
If $a \in G$ then $(au^{-1})^n=a^n \Leftrightarrow (au^{-1}a^{-1})^n=1 \Leftrightarrow u^n=1$.\
This shows that  either $u^n\not= 1$ and $G_n(u)=\phi$; or
$u^n=1$ and $G_n(u)=G$.\ Now Lemma \ref{power} applies, and shows that
$G$ is $FSZ$.\ But every subgroup of a regular $p$-group is also regular 
(loc.\ cit.), so Theorem \ref{thmcent1} now tells us that $G$ is $FSZ^+$.\ We can again
make the indicators more explicit.\ If $G_n(u)=\phi$ then
$\zeta_n$ vanishes identically and the corresponding indicators
$\nu_n(\chi_{\eta})=0$.\ On the other hand, if $u^n=1$ then
$G_n(u, g)=\{a\in G\ | \ a^n=g\}$.\ Since $\zeta_n(g) = |G_n(u, g)|$, this means that for $g\in C$
the values of $\zeta_n$ \emph{coincide} with those of the generalized character $\varphi^G_n=\sum_{\eta} \nu^G_n(\eta)\eta$ (\ref{zeta'form}), i.e.,
\begin{eqnarray*}
\zeta_n = Res^G_C \varphi^G_n.
\end{eqnarray*}

\medskip
We note that all $p$-groups of order \emph{less than} $p^{p+1}$, nilpotence class \emph{less than} 
$p$, or exponent $p$, are regular \cite{H}, and therefore FSZ.\ We will see later that there 
are groups of order $5^6$ and exponent $5^2$ which are \emph{not} $FSZ$. 

\medskip
\begin{example} \label{wreath} {\rm \ {\bf Wreathed products $Z_p \wr Z_p$}.\
This $p$-group (the regular wreathed product of $Z_p$ with itself), is an irregular
$p$-group of order $p^{p+1}$.\ We will show that it is $FSZ^+$, so that the class of
$FSZ$ $p$-groups is \emph{strictly larger} than the class of regular $p$-groups.}
\end{example}

\medskip
$G$ has a normal elementary abelian $p$-subgroup
$V$ of order $p^p$ (which we consider as a $GF(p)$-vector space of rank $p$) and a cyclic subgroup $J=\langle j \rangle$ of order $p$ such that $j$ cyclically
permutes a basis of $V$. We identify $j$ with the $p\times p$ Jordan block
\begin{eqnarray*}
j = \left(\begin{array}{cccc} 0&  &  & 1 \\1 &  &  &  \\ & \ddots &  &  \\ &  & 1 &0 \end{array}\right)
\end{eqnarray*}
Then $G = J \ltimes V$, and multiplication in $G$ is $(j^r, x)(j^s, y)=(j^{r+s}, j^sx+y)$.\
The subgroup of $G$ generated by $p^{th}$ powers coincides with the center
$Z:=Z(G)$. $Z\subseteq V$ is $1$-dimensional and spanned by the all $1$'s vector $\mathbf{1}$.
The operator $k:= 1+j+\hdots+j^{p-1}$ plays a r\^{o}le below;
$k$ is the all $1$s matrix and $kV=Z$.
 
\medskip
Because $G$ has exponent $p^2$, in proving that $G$ is $FSZ$ it suffices to consider 
only the cases $n=p, p^2$, and of these only $n=p$ is in doubt. Fix some $u^{-1}:=(j^d, t)\in G$.
By what we have said, $G_p(u, g)=\phi$ unless $g\in Z$.\ We show that 
(\ref{gpeqns}) holds with $n=p$ and $1\not= g\in Z$.

\medskip
An element $a=(j^i, v)$ satisfies $a^p = (1, kv)$ if $j^i\not= 1$, and otherwise $a^p=1$.
Moreover, $au^{-1}=(j^{i+d},j^dv+t )$. Therefore, $(au^{-1})^p=a^p=g$ if, and only if,
the four conditions $i\not \equiv 0(\mbox{mod}\ p), i+d\not \equiv 0(\mbox{mod}\ p),  kv=g, k(j^dv+t)=g$
all hold.\ But $k(j^dv+t)=kv+kt$ (because $k$ is the all $1$s matrix and $j^d$ a permutation matrix).\ So if the four conditions hold then we must
have $kt=0$. Turned around, if $u^{-1}=(j^d, t)$ and $kt\not= 0$ then
$G_p(u, g)=\phi$ for any nonzero $g \in Z$.\ If, on the other hand, $kt=0$, then
$G_p(u, g)= \{(j^i, v)\ | \ i \not \equiv 0, -d (\mbox{mod}\ p), kv=g \}$ 
has cardinality $(p-1)p^{p-1}$ or $(p-2)p^{p-1}$ depending on whether or not
$d \equiv 0(\mbox{mod}\ p)$.\ So in all cases, the desired property of
$G_p(u, g)$ is established.\ Note that the hypotheses of Lemma \ref{power} 
do \emph{not} hold in this case, so they are sufficient but not necessary.

\medskip \noindent
\begin{example}\label{DP} {\rm \ {\bf Direct products}.\ It follows easily from the definition 
that, in an obvious sense,  $|G_n(u, g)|$ is \emph{multiplicative} over direct products of groups.\
Thus the class of $FSZ$ groups is closed under direct products; for example, 
a nilpotent group, all of whose Sylow subgroups are regular, is $FSZ$.\ On the other hand,
the direct product of two groups, one of which is $FSZ$ and one which is not,
is itself \emph{not} $FSZ$.\ So once we have found a non$FSZ$ group $F$, say,
there are large numbers of non$FSZ$ groups having $F$ as a direct factor.}
\end{example}

\medskip \noindent
\begin{example}\label{regnilp} {\rm\ {\bf Regular CN-groups}.\ $G$ is a $CN$-group if the centralizer of every nonidentity
element is nilpotent (i.e., a direct product of $p$-groups).\ We call $G$ a \emph{regular}
CN-group if the centralizer of every nonidentity element is a direct product of regular $p$-groups.\ 
By Theorem \ref{thmcent1} together with Examples \ref{reg} and \ref{DP} above, every
regular CN-group is FSZ.}
\end{example}

\medskip \noindent
\begin{example} {\rm {\bf Mathieu group $M_{11}$}.\ We show that $M_{11}$ is $FSZ$ by applying
Lemma \ref{lemmacent}.\ Indeed, it is readily checked \cite{AT} that centralizers of elements of
order $5$ or greater than $6$ are $FSZ$, and the result follows.} 
\end{example}

\medskip

\section{A sufficiency condition and groups of special orders}

We start by showing that  (\ref{Gniso}) holds automatically for \emph{some} values of $m$.\
\begin{lemma}\label{L1}
Suppose that $m \in \ZZ$ satisfies
$gcd(m, |G|)=1$ and
$m\equiv \pm 1\ (\mbox{mod}\ n$). Then for all pairs of commuting elements $u, g\in G$, there is an isomorphism
of $C(u, g)$-sets
$$
G_{n}(u,g) \rightarrow G_n(u, g^{m}), \ \ a\mapsto a^m.
$$
\end{lemma}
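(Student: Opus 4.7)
The map $a\mapsto a^m$ is automatically $C(u,g)$-equivariant, since conjugation commutes with power maps: $(hah^{-1})^m = ha^m h^{-1}$ for any $h \in G$. It is bijective on all of $G$ because $\gcd(m,|G|)=1$ makes $x\mapsto x^m$ a bijection of $G$. So the content of the lemma is the well-definedness assertion: $a\in G_n(u,g) \Rightarrow a^m\in G_n(u,g^m)$. The identity $(a^m)^n = (a^n)^m = g^m$ is immediate; the crux is to verify $(a^m u^{-1})^n = g^m$.

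The key observation I will exploit is that $g$ is central in $\langle a,u\rangle$: it commutes with $a$ (because $g=a^n$), with $u$ (by hypothesis), and, crucially, with $au^{-1}$ (because $g=(au^{-1})^n\in\langle au^{-1}\rangle$). Consequently $g$ also commutes with $u^{-1}$ and with every product of $a$'s, $u^{\pm 1}$'s, and their inverses that we will encounter.

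For the case $m\equiv 1\pmod n$, write $m=1+kn$. Then $a^m = a\cdot(a^n)^k = ag^k$, and using $gu^{-1}=u^{-1}g$ we can rewrite $a^m u^{-1} = (au^{-1})\cdot g^k$. Since $g$ commutes with $au^{-1}$, raising to the $n$th power distributes cleanly:
\[
(a^m u^{-1})^n \;=\; (au^{-1})^n\cdot g^{kn} \;=\; g\cdot g^{kn} \;=\; g^m,
\]
as required. This establishes the lemma in this case.

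For the case $m\equiv -1\pmod n$, I reduce to the previous case using Lemma~\ref{z's}. The map $\phi\colon a\mapsto ua^{-1}$ is a $C(u,g)$-set isomorphism $G_n(u,g)\to G_n(u,g^{-1})$ (the equivariance is a short calculation using that $u\in C(u,g)$). Since $-m\equiv 1\pmod n$ and $\gcd(-m,|G|)=1$, the first case produces a $C(u,g)$-set isomorphism $\psi\colon b\mapsto b^{-m}$ from $G_n(u,g^{-1})$ to $G_n(u,(g^{-1})^{-m}) = G_n(u,g^m)$. The composition $\psi\circ\phi$ then gives the desired $C(u,g)$-set isomorphism $G_n(u,g)\to G_n(u,g^m)$.

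The step I expect to be most delicate is the centrality argument for $g$—in particular verifying that $g$ commutes with $au^{-1}$—since without it the displayed computation in the $m\equiv 1$ case breaks down. Once that observation is in place, both cases follow without further difficulty.
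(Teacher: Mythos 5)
Your approach matches the paper's, and your central computation is the same one the paper uses: for $m = 1 + kn$ one has $a^{m-1}=g^k$, which commutes with $a$, with $u$, and with $au^{-1}$ (the paper phrases this as $a^{m-1}\in\langle a^n\rangle\subseteq C(u)$), whence $(a^m u^{-1})^n = (au^{-1})^n\,g^{kn} = g^m$. Your packaging of the key fact as ``$g$ is central in $\langle a,u\rangle$'' is a clean way to see it, and your reduction of the $m\equiv -1$ case to the $m\equiv 1$ case via Lemma~\ref{z's} is likewise what the paper does.

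There is, however, a gap in your reduction of the lemma to ``well-definedness.'' From the fact that $x\mapsto x^m$ is a bijection on all of $G$ and carries $G_n(u,g)$ into $G_n(u,g^m)$, you only obtain an \emph{injection} $G_n(u,g)\hookrightarrow G_n(u,g^m)$: a bijection of $G$ restricted to a subset that lands inside another subset need not be onto that subset. To finish you must run the same well-definedness argument for a suitable inverse exponent $m'$ (with $m'\equiv 1\ (\mathrm{mod}\ n)$, $\gcd(m',|G|)=1$, and $mm'\equiv 1$ modulo the exponent of $G$; such an $m'$ exists by CRT because $m\equiv 1\ (\mathrm{mod}\ n)$), which gives the inverse map $G_n(u,g^m)\to G_n(u,g)$ and hence surjectivity. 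This is precisely what the paper does when it invokes $\varphi_{m'}$. A further small remark, shared with the paper's own argument: the composition $\psi\circ\phi$ you build for $m\equiv -1\ (\mathrm{mod}\ n)$ sends $a\mapsto (ua^{-1})^{-m}=(au^{-1})^m$, which is not literally the map $a\mapsto a^m$ named in the statement; it is nonetheless a $C(u,g)$-set isomorphism $G_n(u,g)\to G_n(u,g^m)$, which is all that Theorem~\ref{thmqnt'} actually uses.
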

\begin{proof}
Let $a \in G_n(u, g)$, so that $(au^{-1})^n=a^n=g$, and suppose first that
$m\equiv 1\ (\mbox{mod}\ n)$.\ Then
\begin{eqnarray*}
(a^{m}u^{-1})^n & = & (a^{m-1}au^{-1})^n \\
& = & a^{(m-1)n}(au^{-1})^n  \ \ \  {\rm \ (because \ }a^{m-1}\in \langle a^n \rangle \subseteq C(u))\\
& = & a^{mn}.
\end{eqnarray*}
This argument shows that  the $m^{th}$ power map
$\varphi_m: a\mapsto a^m$ induces an \emph{injection} $G_n(u, g)\rightarrow G_n(u, g^m)$.\ If $m'$ is the inverse of $m\ (\mbox{mod}\ n)$, then $\varphi_{m'}$ is the inverse of $\varphi_m$,
so that $\varphi_m$ is a \emph{bijection}.\ Moreover, because conjugation commutes with power maps,
$\varphi_m$ is also a morphism of $C(u, g)$-sets.

\medskip
 This completes the proof of the Lemma when $m\equiv 1\ (\mbox{mod}\ n)$.\ If $m\equiv -1\ (\mbox{mod}\ n)$, the result follows
from the case $m\equiv 1\ (\mbox{mod}\ n)$ together with 
Lemma \ref{z's}, because the bijection of Lemma \ref{z's} is $C(u, g)$-equivariant.
\end{proof}

\medskip
For some groups, Lemma \ref{L1} tells us all we need to know in order to deduce the $FSZ^+_n$ property.\ This is the essential content of
\begin{theorem}\label{thmqnt'}
Suppose that for each element $a\in G$ we have 
$$ o(a^n)/o(a^{n^2})\in \{1,2,3,4,6\}.$$ Then
$G$ is an $FSZ^+_n$-group.
\end{theorem}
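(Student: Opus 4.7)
The plan is to prove the stronger isomorphism property (\ref{Gniso}), which implies (\ref{Gnpermiso}) and hence the $FSZ^+_n$ condition. Fix a pair of commuting elements $u, g \in G$ and an integer $m$ with $\gcd(m, |G|) = 1$; I aim to construct a $C_G(u, g)$-set isomorphism $G_n(u, g) \cong G_n(u, g^m)$. The idea is to adjust $m$ modulo $o(g)$ to an exponent that is $\equiv \pm 1 \pmod n$, whereupon Lemma \ref{L1} applies directly.

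Assume first that $G_n(u, g) \neq \emptyset$, and pick $a \in G_n(u, g)$, so that $g = a^n$. Set $d := o(g)/o(g^n) = o(a^n)/o(a^{n^2})$, so the hypothesis gives $d \in \{1,2,3,4,6\}$; moreover the identity $o(g^n) = o(g)/\gcd(o(g), n)$ shows $d = \gcd(o(g), n)$. The essential arithmetic observation is that for each $d \in \{1,2,3,4,6\}$ the unit group $(\ZZ/d\ZZ)^\times$ equals $\{\pm 1\}$ (direct check). Since $d \mid |G|$ and $\gcd(m, |G|) = 1$, this forces $m \equiv \pm 1 \pmod d$.

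I then construct an integer $m'$ satisfying $m' \equiv m \pmod{o(g)}$, $m' \equiv \pm 1 \pmod n$ (with the same sign as the congruence for $m$ above), and $\gcd(m', |G|) = 1$. The first two congruences are compatible modulo $\gcd(o(g), n) = d$ by the previous paragraph, so CRT yields a solution $m'_0$ modulo $L := {\rm lcm}(o(g), n)$; since $\gcd(m, o(g)) = 1$ and $\gcd(\pm 1, n) = 1$, any such $m'_0$ is automatically coprime to $L$. For each prime $p \mid |G|$ with $p \nmid L$, the progression $m'_0 + tL$ traverses all residues modulo $p$, so a finite CRT combination on these finitely many primes picks $t$ making $m' := m'_0 + tL$ coprime to $|G|$. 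Lemma \ref{L1} then gives a $C_G(u, g)$-set isomorphism $a \mapsto a^{m'}$ from $G_n(u, g)$ to $G_n(u, g^{m'})$, which coincides with $G_n(u, g^m)$ since $m' \equiv m \pmod{o(g)}$.

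The case in which $G_n(u, g)$ or $G_n(u, g^m)$ could be empty is handled symmetrically: if $G_n(u, g^m) \neq \emptyset$, running the same argument with $g^m$ in place of $g$ and $\bar m$ (any integer with $m\bar m \equiv 1 \pmod{|G|}$) in place of $m$ yields an isomorphism $G_n(u, g^m) \cong G_n(u, g)$. Thus both sets are nonempty or both are empty, and in either case the required isomorphism exists. The main conceptual point—and where the hypothesis on $o(a^n)/o(a^{n^2})$ feeds in—is that $(\ZZ/d\ZZ)^\times = \{\pm 1\}$ \emph{precisely} for $d \in \{1,2,3,4,6\}$; everything else is routine CRT plus a prime-avoidance argument, so the hard part is identifying the right exponent $m'$ to invoke Lemma \ref{L1}.
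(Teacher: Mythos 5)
Your proof is correct and follows essentially the same route as the paper: reduce to (\ref{Gniso}), observe that $d := o(a^n)/o(a^{n^2}) = \gcd(o(g), n)$ lies in $\{1,2,3,4,6\}$, use $(\ZZ/d\ZZ)^\times = \{\pm 1\}$ to get $m \equiv \pm 1 \pmod d$, replace $m$ by an exponent $m'$ with $m' \equiv m \pmod{o(g)}$ and $m' \equiv \pm 1 \pmod n$, and invoke Lemma~\ref{L1}. The one place you diverge is in producing $m'$: the paper writes $d = at + bn$ (with $t = o(g)$) and takes $m'' = \pm 1 + kbn$ directly, which yields $m'' \equiv m \pmod t$ and $m'' \equiv \pm 1 \pmod n$ but is only visibly coprime to $tn$, not to all of $|G|$ as Lemma~\ref{L1} literally requires; you instead use CRT plus a prime-avoidance step to additionally arrange $\gcd(m', |G|) = 1$, so the lemma applies verbatim. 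That extra step is a genuine (if small) tightening — the paper's invocation can be repaired, since every element of $G_n(u,g)$ has order dividing $o(g)\,n$, but you close the loose end explicitly. Your handling of the possibly-empty case is also sound, just slightly more roundabout than the paper's (which notes that $d$ depends only on $o(g)$ and $n$, so the isomorphism for all $m$ follows at once from the nonemptiness of any single $G_n(u, g^{m_0})$).
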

\begin{proof}\ It suffices to show that (\ref{Gniso}) holds for all commuting pairs of elements $u, g$
and integers $m$ coprime to $|G|$.\ For in this case,
 the Remark that follows (\ref{Gnpermiso}) shows that $G$ is $FSZ^+_n$.

\medskip
Suppose that $o(g)=t$.\ If $G_n(u, g^m)=\phi$ for all $m$ there is nothing to prove,
so assume that $a\in G_n(u, g^m)$ for some $m$.\ Thus $a^n = g^m$ has order $t$, and 
$a^{n^2}=g^{mn}$ has order $t/gcd(t, n)$.\ Thus
$o(a^n)/o(a^{n^2}) = gcd(t, n)$.\ As a result, the hypothesis of the Theorem tells us that
\begin{eqnarray}\label{gcdt}
gcd(t, n) \in \{1, 2, 3, 4, 6\}.
\end{eqnarray}

Set $d=gcd(t, n)$.\ Because $m$ is coprime to $t$ (Cauchy's theorem)
it is also coprime to $d$.\ Because of (\ref{gcdt}), it follows that
\begin{eqnarray*}
m \equiv \pm 1\ (\mbox{mod}\ d).
\end{eqnarray*}

There are integers $a, b$ such that $at+bn=d$.\ Then for some integer $k$ we have
$m = \pm1 +kd=\pm 1+kat+kbn$ and therefore $g^m = g^{\pm1 +kbn}$.\ Now by
Lemma \ref{L1} we see that there is an isomorphism of $C(u, g)$-sets  
$G_n(u, g) \rightarrow G_n(u, g^m)$, as required.\
This completes the proof of the Theorem.
\end{proof}

\medskip

The Theorem has some interesting consequences.
\begin{corollary}\label{special}
Suppose that the order of each element of $G$ divides $2^\alpha 3^\beta q$, where 
$q$ is a squarefree integer coprime to $6$ and either $\alpha,\beta\leq 3$ or $\beta\leq 1$ and $\alpha\leq 5$.\ Then $G$ is $FSZ^+$.
\end{corollary}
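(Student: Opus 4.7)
The plan is to reduce to Theorem \ref{thmqnt'}: it suffices to verify, for every $a\in G$ and every positive integer $n$, that $o(a^n)/o(a^{n^2})\in\{1,2,3,4,6\}$, since this will show $G$ is $FSZ^+_n$ for all $n$ and hence $FSZ^+$. Setting $t=o(a)$ and using the standard identity $o(a^k)=t/\gcd(t,k)$, the quantity to control is
$$\frac{o(a^n)}{o(a^{n^2})}=\frac{\gcd(t,n^2)}{\gcd(t,n)},$$
which is multiplicative over primes: if $p^{a_p}\,\|\,t$ and $p^{b_p}\,\|\,n$, then the $p$-part of the ratio equals $p^{\min(a_p,2b_p)-\min(a_p,b_p)}$. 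The task therefore reduces to bounding that exponent in terms of $a_p$ alone.

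A short case split on whether $a_p\le b_p$, $b_p<a_p\le 2b_p$, or $a_p>2b_p$ shows that for each fixed $a_p$ the maximum over $b_p\ge 0$ of $\min(a_p,2b_p)-\min(a_p,b_p)$ is exactly $\lfloor a_p/2\rfloor$. In particular, any prime with $a_p\le 1$ contributes $1$ to the ratio. Because $t$ divides $2^\alpha 3^\beta q$ with $q$ squarefree and coprime to $6$, the primes dividing $q$ all satisfy $a_p\le 1$ and contribute trivially; the only nontrivial contributions come from $p=2$ and $p=3$, and they are bounded by $2^{\lfloor\alpha/2\rfloor}$ and $3^{\lfloor\beta/2\rfloor}$ respectively.

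Plugging in the two allowed regimes completes the argument. If $\alpha,\beta\le 3$, then the ratio divides $2\cdot 3=6$, hence lies in $\{1,2,3,6\}$; if $\alpha\le 5$ and $\beta\le 1$, the ratio divides $2^2\cdot 1=4$, hence lies in $\{1,2,4\}$. In either case the ratio lies in $\{1,2,3,4,6\}$, and Theorem \ref{thmqnt'} applied to every $n$ yields the corollary. There is essentially no obstacle here: the heart of the argument is the elementary identity $\max_{b\ge 0}\bigl[\min(a,2b)-\min(a,b)\bigr]=\lfloor a/2\rfloor$, and the numerical hypotheses on $\alpha,\beta$ are calibrated precisely so that the resulting upper bound for the ratio never exceeds $6$ and never introduces a prime factor other than $2$ or $3$.
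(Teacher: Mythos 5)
Your proposal is correct and takes the same route the paper intends: the paper's entire proof is the one‑line remark that the hypothesis of Theorem \ref{thmqnt'} holds for all $n$, and you have supplied the verification that the paper leaves to the reader. The reduction $o(a^n)/o(a^{n^2})=\gcd(t,n^2)/\gcd(t,n)$, the prime‑by‑prime exponent $\min(a_p,2b_p)-\min(a_p,b_p)$, the elementary bound $\max_{b\ge 0}\bigl[\min(a,2b)-\min(a,b)\bigr]=\lfloor a/2\rfloor$, and the two numerical case checks are all accurate, and they show precisely that the ratio is always a divisor of $6$ (first regime) or of $4$ (second regime), hence always in $\{1,2,3,4,6\}$.
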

\begin{proof}
The  hypothesis of the previous theorem hold for all $n$.
\end{proof}

\begin{corollary}\label{corsmalln} Suppose that $n\in\{1, 2, 3, 4, 6\}$.\ Then 
every group $G$ is $FSZ^+_n$. 
\end{corollary}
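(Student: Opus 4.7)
The plan is to deduce the corollary directly from Theorem \ref{thmqnt'}. Given any $a\in G$ of order $t$, observe that $o(a^n)=t/\gcd(t,n)$ and $o(a^{n^2})=t/\gcd(t,n^2)$, so the ratio simplifies to
\[
\frac{o(a^n)}{o(a^{n^2})}=\frac{\gcd(t,n^2)}{\gcd(t,n)}.
\]
This ratio depends only on $t$ and $n$, so the task reduces to checking, for each $n\in\{1,2,3,4,6\}$ and each positive integer $t$, that $\gcd(t,n^2)/\gcd(t,n)\in\{1,2,3,4,6\}$. Once this is verified, Theorem \ref{thmqnt'} applies and yields the $FSZ^+_n$ property for $G$.

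The verification is a short prime-by-prime calculation. The case $n=1$ is trivial. For $n$ a prime power $p^k$ ($p^k\in\{2,3,4\}$), the ratio is a power of $p$ depending only on the $p$-part of $t$, namely $1$, $p$, or $p^2$, and in each case the value lies in $\{1,2,3,4\}$. For $n=6$, the ratio factors multiplicatively as $\bigl(\gcd(t,4)/\gcd(t,2)\bigr)\cdot\bigl(\gcd(t,9)/\gcd(t,3)\bigr)$, and each factor is again a small power of $2$ or $3$; writing $t=2^a3^bm$ with $\gcd(m,6)=1$ one sees directly that the product lies in $\{1,2,3,6\}$. Thus in every case the hypothesis of Theorem \ref{thmqnt'} holds, giving the result.

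There is no real obstacle here; the point is simply that the exceptional set $\{1,2,3,4,6\}$ appearing in Theorem \ref{thmqnt'} is designed precisely so that squaring $n$ inflates the gcd by a factor from the same set. One might also remark that this corollary recovers, at the level of permutation-module isomorphism rather than just integrality, the result of \cite{LM} that every $\nu_2(V)\in\{0,\pm 1\}$, as well as the known integrality of $\nu_3,\nu_4,\nu_6$ for arbitrary finite groups.
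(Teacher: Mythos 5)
Your argument is correct and follows the same route as the paper: verify the hypothesis of Theorem \ref{thmqnt'} directly. The paper's own proof is a one-liner that streamlines your calculation: since $o(a^n)/o(a^{n^2}) = \gcd(t,n^2)/\gcd(t,n)$ always \emph{divides} $n$ (check prime-by-prime), and every divisor of each $n\in\{1,2,3,4,6\}$ again lies in $\{1,2,3,4,6\}$, the hypothesis holds; no separate case analysis is needed. One small imprecision in your write-up: for $n=p$ (i.e.\ $n=2$ or $3$) the ratio can only be $1$ or $p$, never $p^2$ --- which matters, since $p^2=9$ would escape $\{1,2,3,4\}$. Your stated conclusion is nonetheless correct because that value is not actually attained; the ``divides $n$'' observation makes this automatic.
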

\begin{proof}\ We always have 
$o(a^n)/o(a^{n^2})|n$ for $a\in G$ (cf.\ the proof of Theorem \ref{thmqnt'}).
\end{proof}

\begin{corollary}
All 
groups of order less that $100$ are $FSZ^+$. $\hfill \Box$
\end{corollary}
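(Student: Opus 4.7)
The plan is to apply Corollary \ref{special} to handle the bulk of the cases, then dispatch a short list of exceptions individually. Let $G$ be a group with $|G|<100$ and let $e$ be the exponent of $G$, so $e\mid |G| <100$. Write $e=2^\alpha 3^\beta q$ with $\gcd(q,6)=1$.

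First I would determine precisely which exponents $e<100$ fail the hypothesis of Corollary \ref{special}. Failure requires either that $q$ is not squarefree, or that $(\alpha,\beta)$ lies outside the allowed region. Since $11^2=121>99$, the first case forces $p^2\mid e$ for some $p\in\{5,7\}$, yielding $e\in\{25,49,50,75,98\}$. In the second case, the combination $\alpha\geq 4$ with $\beta\geq 2$ is ruled out by $2^4\cdot 3^2=144>99$, so failure reduces to $\alpha\geq 6$ (which forces $e=64$) or $\beta\geq 4$ (which forces $e=81$). Thus the potentially troublesome exponents are $e\in\{25,49,50,64,75,81,98\}$.

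Next I would identify the groups of order less than $100$ realizing each of these seven exponents. For $e\in\{25,49\}$, $G$ has order $p^2$ and is abelian. For $e\in\{64,81\}$, any group of order $p^n$ with an element of order $p^n$ is cyclic. For $e=75$ the only possibility is $Z_{75}$, since the nonabelian group of order $75$ has exponent $15$ (because $\mathrm{Aut}(Z_{25})\cong Z_{20}$ has no element of order $3$, so the nontrivial semidirect product requires the Sylow 5-subgroup to be elementary abelian). For $e=50$ and $e=98$ the realizing groups are $Z_{2p^2}$ and the dihedral group $D_{p^2}$, for $p=5$ and $p=7$ respectively.

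Finally I would verify each exception is $FSZ^+$. All abelian cases are covered by Examples \ref{reg} and \ref{DP}. For the dihedral groups $D_{25}$ and $D_{49}$, every nonidentity element has abelian centralizer: the centralizer of a rotation is the cyclic rotation subgroup, and the centralizer of a reflection (since $p^2$ is odd) is the two-element subgroup it generates. Theorem \ref{thmcent1} then yields $FSZ^+$. The only slightly non-routine step is the Sylow analysis identifying the groups of orders $50$, $75$, and $98$, but this is standard and presents no real obstacle.
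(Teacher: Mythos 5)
Your proof is correct and is clearly the argument the authors had in mind: the paper gives the corollary without proof, positioned immediately after Corollary~\ref{special}, and your deduction supplies the missing details. The case analysis is sound: you correctly identify the seven exponents $\{25,49,50,64,75,81,98\}$ that escape Corollary~\ref{special}, correctly determine (using Cauchy's theorem implicitly for orders $50,75,98$) that the groups of order less than $100$ realizing them are the cyclic groups $Z_{25},Z_{49},Z_{50},Z_{64},Z_{75},Z_{81},Z_{98}$ and the dihedral groups $D_{25},D_{49}$ of orders $50$ and $98$, and correctly dispatch all of these via Example~\ref{CA} (abelian groups and the CA-group observation for $D_{p^2}$, $p$ odd) together with Theorem~\ref{thmcent1}.
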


Apropos of our introductory comments that some of our stated results are not best-possible, this is certainly true of the preceding Corollary.\ Because all groups of order $512=2^9$ are FSZ (cf.\ Section 7), one could  check all groups up to order $728=3^6-1$ rather easily.\ It is very likely that the  nonFSZ-groups of \emph{least order} are irregular $p$-groups of order$2^k (10\leq k \leq 13), 3^k (6\leq k\leq 8)$, or $5^6$.

\medskip \noindent
\begin{corollary} {\rm {\bf Mathieu groups $M_{11}$, $M_{12}$}.\ We have seen before that the Mathieu Group $M_{11}$ is FSZ. We note that we could use the fact that the exponent of both $M_{11}$ and $M_{12}$ is $2^3\cdot 3\cdot 5\cdot 11$ and apply Corollary \ref{special} to see that, in fact, both $M_{11}$ and $M_{12}$ are $FSZ^+$.}
\end{corollary}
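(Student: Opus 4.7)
The plan is essentially to verify the claimed exponent and then invoke Corollary \ref{special} as a black box. First I would look up in the ATLAS \cite{AT} the set of element orders of $M_{11}$ and $M_{12}$. For $M_{11}$ these are $\{1,2,3,4,5,6,8,11\}$, and for $M_{12}$ they are $\{1,2,3,4,5,6,8,10,11\}$. Taking least common multiples shows that in both cases the exponent equals $e=2^3\cdot 3\cdot 5\cdot 11$.

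Next I would match $e$ against the hypothesis of Corollary \ref{special}. Writing $e=2^{\alpha}3^{\beta}q$ with $\alpha=3$, $\beta=1$, and $q=5\cdot 11=55$, we see that $q$ is squarefree and coprime to $6$, and the pair $(\alpha,\beta)=(3,1)$ satisfies both of the alternatives offered by the Corollary (either $\alpha,\beta\le 3$, or $\beta\le 1$ and $\alpha\le 5$). Since the order of every element of $M_{11}$ or $M_{12}$ divides $e$, the Corollary applies and yields that both groups are $FSZ^+$.

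There is essentially no obstacle here: the content is entirely in the preceding Theorem \ref{thmqnt'} and Corollary \ref{special}, and the only verification needed is the elementary fact about element orders in the two Mathieu groups, which is standard character-table information. I would simply note that this strengthens the earlier observation (obtained via Lemma \ref{lemmacent}) that $M_{11}$ is $FSZ$, promoting both $M_{11}$ and $M_{12}$ to the stronger $FSZ^+$ class without any additional case analysis on centralizers.
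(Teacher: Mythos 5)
Your proposal is correct and coincides with the paper's own (very short) argument: look up the element orders in the ATLAS to confirm that the exponent of each group is $2^3\cdot 3\cdot 5\cdot 11$, then apply Corollary \ref{special} with $\alpha=3$, $\beta=1$, $q=55$. The only content beyond the reference to Corollary \ref{special} is the exponent computation, which you carry out correctly.
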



\section{Symmetric  groups}

\begin{theorem}\label{thmsymm} The symmetric group $S_N$ is an $FSZ$-group.
\end{theorem}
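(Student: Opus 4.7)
The plan is to prove Theorem~\ref{thmsymm} by establishing the stronger assertion that $S_N$ is $FSZ^+$ for every $N$. The cases $N\le 2$ are trivial since $S_N$ is then abelian. For $N\ge 3$, by Theorem~\ref{thmcent1} it suffices to show that $C_{S_N}(h)$ is $FSZ$ for every nonidentity $h\in S_N$. The classical computation yields
\[ C_{S_N}(h)\ \cong\ \prod_{k\ge 1}(Z_k\wr S_{a_k}), \]
where $h$ has $a_k$ cycles of length $k$. Since $h\ne 1$ one has $a_k<N$ for every $k$ (explicitly $a_1\le N-2$ and $a_k\le N/k$ for $k\ge 2$). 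By the multiplicativity of the $FSZ$ property over direct products (Example~\ref{DP}) it reduces to proving:

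\medskip
\noindent\textbf{Sub-claim.} \emph{For every finite abelian group $A$ and every $j\ge 0$, the wreath product $W:=A\wr S_j$ is $FSZ$.}
\medskip

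I would prove the sub-claim by induction on $j$, uniformly in $A$. The cases $j\in\{0,1\}$ give the trivial group and $A$ respectively, both abelian and thus $FSZ$. For the inductive step $j\ge 2$ I verify the criterion of Corollary~\ref{corequiv}: for every commuting pair $u,g\in W$ and every $m$ coprime to $|W|$, one must show $|G_n(u,g)|=|G_n(u,g^m)|$.

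If $g$ is non-central in $W$, then $G_n(u,g)\cup G_n(u,g^m)\subseteq C_W(g)$, and the standard description of centralizers in a wreath product of an abelian group by a symmetric group exhibits $C_W(g)$ as a direct product of smaller wreath products $A'\wr S_{j'}$ with $A'$ finite abelian and $j'<j$. By the inductive hypothesis and Example~\ref{DP}, $C_W(g)$ is $FSZ$, so the desired equality holds inside $C_W(g)$, hence in $W$.

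The remaining case, when $g$ is central in $W$, is the main obstacle. Then the $S_j$-component of $g$ is trivial and its base vector is a constant diagonal $(c,c,\ldots,c)$ for some $c\in A$, with $g^m$ the analogous diagonal built from $c^m$; the reduction to a proper centralizer is unavailable. For this I would use the explicit description of $n$-th roots of $g$ in $W$: an element $a=(x;\tau)\in W$ satisfies $a^n=g$ if and only if the cycle structure of $\tau$ and the corresponding cycle products of $x$ in $A$ are compatible with the diagonal $c$. The bijection $G_n(u,g)\to G_n(u,g^m)$ would then arise from a coordinate-wise twist on the base $A^j$ dictated by the fixed element $u$, and the technical heart of the proof is to verify that this twist simultaneously respects both conditions $a^n=g$ and $(au^{-1})^n=g$. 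Once the central case is settled the induction closes, the sub-claim follows, and Theorem~\ref{thmsymm} follows from the centralizer reduction above.
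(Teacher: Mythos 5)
Your strategy is genuinely different from the paper's, but it has a real gap at precisely the point where the work has to be done.

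The paper proves Theorem~\ref{thmsymm} by verifying the criterion of Corollary~\ref{corequiv} directly in $S_N$: for $g\neq 1$ it reduces, via multiplicativity over direct products and induction on $N$, to the case where $g$ is a product of $e$ cycles of length $r$, so that $C_{S_N}(g)=S_e\wr Z_r$ with cyclic base $Z_r$ and $g$ equal to the diagonal element $(1;g_0,\ldots,g_0)$; it then computes $|G_n(u,g^m)|$ by fibering over the $S_e$-component $s$ of a prospective root and showing that, for fixed $s$ and fixed $u^{-1}=(t,w)$, the relevant count is $0$ or $|[V,st]\cap[V,s]|$ according to whether $m^{-1}(n/l)w^{(st)^{-1}}$ lies in $[V,st]+[V,s]$ --- a condition visibly independent of $m$. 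You instead aim for the stronger conclusion that $S_N$ is $FSZ^+$, invoke Theorem~\ref{thmcent1} to reduce to centralizers, and arrive at the sub-claim that $A\wr S_j$ is $FSZ$ for \emph{every} finite abelian $A$. That sub-claim is exactly Etingof's Theorem~\ref{thmEsymm}, which the paper quotes without proof and explicitly describes as requiring a more elaborate argument than Theorem~\ref{thmsymm} itself; the paper only derives the $FSZ^+$ statement for $S_N$ as a corollary of Etingof's result, not as part of the proof of Theorem~\ref{thmsymm}.

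The concrete gap is your ``central $g$'' case. After the centralizer reduction and the (plausible, but unverified) claim that centralizers of non-central elements decompose as products of strictly smaller wreath products $A'\wr S_{j'}$, everything hinges on showing $|G_n(u,g)|=|G_n(u,g^m)|$ when $g$ is the diagonal element of $A\wr S_j$. You say you ``would use the explicit description of $n$-th roots'' and that ``the bijection would then arise from a coordinate-wise twist on the base,'' but you do not construct this twist, and in fact the paper does \emph{not} produce a bijection in this case: it proves an equality of cardinalities via the coset/intersection count above, which is not a coordinate-wise twist at all, and even then only for cyclic $A=Z_r$. Without that computation (or Etingof's more elaborate version of it for general abelian $A$), the induction does not close and the argument is incomplete. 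A secondary point worth tightening is your assertion about $C_W(g)$ for non-central $g$: the abelian ``blocks'' $A'$ that appear are extensions of $A$ by the cyclic groups generated by the cycles (not copies of $A$ itself), which is exactly why the sub-claim must be stated for arbitrary finite abelian $A$ rather than just cyclic $A$ --- you do say this, but it deserves verification rather than appeal to ``the standard description.''
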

\begin{proof}\ We have to show that (\ref{gpeqns}) holds for all commuting pairs
$u, g\in S_N$ and all $m$ coprime to $N!$.\ There is nothing to prove if $g=1$, so assume
that $1\not= g$ is a product of $e_i$ cycles of length $i$.\ Thus $\sum_i ie_i=N$,
and $C(g)$ is contained in a subgroup of $S_N$ isomorphic to
$S_{e_1}\times S_{2e_2}\times \hdots \times S_{Ne_N}$.\ Using induction on $N$
and the multiplicativity of $G_n(u, g)$ over direct products (cf.\  Subsection 4, Example 5),
we may, and shall, assume that $g$ is a product of $e$ cycles 
of length $r$, with $N=er$.\ Then $C(g)=S_e\wr Z_r = S \ltimes V$ where $S=S_e$ and
$V=Z_r^e$.\ We use additive notation for $V$, and write  $g=(\rm{1}, g_0,  \hdots, g_0)$ with
$(g_0,\dots,g_0)\in Z_r^e$ for some 
\emph{generator} $g_0$ of $Z_r$.\ In effect, $V$ is a right
$Z_rS$-module with $S$-action denoted exponentially by $v^s$ (which is right conjugation
$s^{-1}vs$ in $C(g)$), and multiplication in $C(g)$ is
\begin{eqnarray*}
(s, v)(t, w) = (st, v^t+w).
\end{eqnarray*}
Then
\begin{eqnarray*}
(s, v)^n  = (s^n, v+v^s+\hdots+v^{s^{n-1}}).
\end{eqnarray*}

\medskip
Let $(s, v)=sv \in C(g)$ ($s\in S, v\in V$), and let $s=s_1\hdots s_q$ be a representation of
$s$ as a product of disjoint cycles in $S_e$.\ There is a corresponding decomposition
of $v = v_1+\hdots+ v_q$ such that for each index $i$, $s_i$ has length $f_i$,
$v_i\in W_i\cong Z_r^{f_i}$, and $sv=(s_1v_1)\hdots(s_qv_q)\in (S_{f_1}\wr Z_r)\times \hdots \times (S_{f_q}\wr Z_r)$.

\bigskip
Now suppose that $(s, v)^n = g^m = (1, mg)=(1, mg_0, \hdots, mg_0)$.\ Then for each index $i$
we have $(s_i, v_i)^n=(1, mg_0, \hdots mg_0)\in S_{f_i}\wr Z_r$.\
Then $s_i^n=1$, whence $f_i|n$; moreover $(mg_0, \hdots, mg_0)=v_i+v_i^{s_i}+\hdots+v_i^{s_i^{n-1}}
=(n/f_i)(v_i+v_i^{s_i}+\hdots+v_i^{s_i^{f_i-1}})$.\ We can write this more compactly as
\begin{eqnarray}\label{norm1}\num
m(g_0, \hdots, g_0)
=(n/f_i)N_{s_i}(v_i) \in Z_r^{f_i},
\end{eqnarray}
where $N_{s_i}$ denotes \emph{norm} with respect to $\langle s_i \rangle$.\ Note that
because the left hand side of (\ref{norm1}) has order exactly $r$ (as an element of $V$), then  $gcd(n/f_i, r)= 1$.\
Setting $l=gcd(n, r)$, it follows that
 the condition $l|f_i$ (for all indices $i$) is \emph{necessary} in order that there
be \emph{any} solutions of the equation $(s, v)^n=g^m$.

\medskip
It is also sufficient: if $gcd(n/f_i, r)=1$, we can always  find $v'_i\in W_i:=Z_r^{f_i}$ whose norm
is $(n/f_i)^{-1}m(g_0, \hdots, g_0)$.\ The element
$v_i''=(g_0, 0, \hdots, 0, g_0, 0, \hdots, 0 , \hdots, g_0, \hdots, 0)$, of length $f_i$ and with 
$g_0$'s occuring after $l-1$ zeros, has norm $(f_i/l)(g_0, \hdots, g_0)$, and we take
$v'_i =  m(n/l)^{-1}(v_i'')^{s}.$\ Here inverses are taken modulo $r$.

\medskip
The set of \emph{all} $v_i \in W_i$ satisfying (\ref{norm1}) is
\begin{eqnarray*}
v_i'+ker N_{s_i} = v_i'+[W_i, s_i],
\end{eqnarray*}
where $[W_i, s_i]\subseteq W_i$ is spanned by $w-w^{s_i}, w\in W_i$.\ Because $s_j$ acts
trivially on $W_i$ for $i\not= j$, we have $[W_i, s_i] = [W_i, s]$.\ It follows that, for fixed
$s\in S$ as above, the set of all $v\in V$ satisfying $(s, v)^n=g^m$ is
\begin{eqnarray*}
v_1'+\hdots+v_q'+[V, s].
\end{eqnarray*}

\medskip
Fix a second element $(t, w)\in C(g)$, so that $(s, v)(t, w)=(st, v^t+w)$.\ By the foregoing,
the set of $x\in V$ such that $(st, x)^n=g^m$ is the coset
\begin{eqnarray*}
x_1'+\hdots+x_p'+[V, st],
\end{eqnarray*}
where $st$ is a product of $p$ disjoint cycles of lengths $k_1, \hdots, k_p$
and $x_i'= m(n/l)^{-1}(x_i'')^{st}$ with
$x_i''=(g_0, 0, \hdots, 0, g_0, 0, \hdots, 0 , g_0, \hdots0)$ similarly to the previous case.\ Hence, for fixed $s\in S$ and fixed $(t, w)\in C(g)$, the  set of elements
$v\in V$ such that $(s, v)^n = ((s, v)(t, w))^n=g^m$ is
\begin{eqnarray*}
&&(x_1'+\hdots+x_p'+[V, st])\cap (w+(v_1'+\hdots+v_q'+[V, s])^t) \\
=&&m(n/l)^{-1}\left\{\left(\sum_{i=1}^p(x_i'')^{st}+[V, st]\right)\cap
 \left(m^{-1}(n/l)w^{t^{-1}}+\sum_{i=1}^q(v_i'')^s+[V, s]\right)^t\right\}.
\end{eqnarray*}

\medskip
Now we have gone out of our way to arrange that  $\sum_ix_i'' = \sum_i v_i''$.\ Because
$s$ and $st$ leave invariant $[V, s]$ and $[V, st]$ respectively, it follows that the cardinality of the last set is 
\begin{eqnarray*}
&&\left|\left(\sum_{i=1}^px_i''+[V, st]\right)^{st}\cap
 \left(m^{-1}(n/l)w^{(st)^{-1}}+\sum_{i=1}^qv_i''+[V, s]\right)^{st} \right| \\
 =&& \left|\left([V, st]\right)\cap
 \left(m^{-1}(n/l)w^{(st)^{-1}}+[V, s]\right) \right|.
\end{eqnarray*}
This equals $0$
if $m^{-1}(n/l)w^{(st)^{-1}}\notin [V, st]+[V, s]$, and equals $|[V, st]\cap[V, s]|$ if \\
$m^{-1}(n/l)w^{(st)^{-1}}\in  [V, st]+[V, s]$, 
and in both cases
the condition is independent of $m$ coprime to $r$.\ This completes the proof of the
Theorem.
\end{proof}

\medskip
The proof of the Theorem also yields
\begin{corollary} The alternating group $A_N$ is an $FSZ$-group.
\end{corollary}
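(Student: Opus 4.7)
The plan is to bootstrap from Theorem \ref{thmsymm} via a sign twist. Cases $N \leq 3$ are trivial since $A_N$ is abelian, so I assume $N \geq 4$; then $|A_N|$ is even, so every $m$ with $\gcd(m, |A_N|) = 1$ is odd. For commuting $u, g \in A_N$ with $g \neq 1$, I would write
\[
|G_n^{A_N}(u, g^m)| = \tfrac{1}{2}\bigl(A(m) + B(m)\bigr),
\]
with $A(m) := |G_n^{S_N}(u, g^m)|$ and $B(m) := \sum_{a \in G_n^{S_N}(u, g^m)} \mathrm{sgn}(a)$. Theorem \ref{thmsymm} gives $m$-independence of $A(m)$, and since signs multiply over disjoint-cycle factorizations of $g$, the cycle-type reduction of that proof reduces the $m$-independence of $B(m)$ to the same statement for the single-block signed sum
\[
T(m) := \sum_{a \in G_n^{S_{er}}(u, g^m)} \mathrm{sgn}(a),
\]
where $g$ is a product of $e$ cycles of length $r \geq 2$.

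Inside $S \ltimes V = S_e \wr Z_r \hookrightarrow S_{er}$, a direct calculation with $r$-cycles and block-permutations gives $\mathrm{sgn}((s, v)) = \mathrm{sgn}(s)^r (-1)^{(r-1)\sum v_i}$. The proof of Theorem \ref{thmsymm} shows that, for fixed $s \in S$ and $u^{-1} = (t, w)$, the set of $v$ with $(s, v) \in G_n^{S_{er}}(u, g^m)$ is (when nonempty) a coset $v_0(s, m) + K(s)$, with $K(s) = [V, st] \cap [V, s]^t$ $m$-independent. Since $N_{s, n}(v_0(s, m)) = m g_V = N_{s, n}(m v_0(s, 1))$, one has $v_0(s, m) - m v_0(s, 1) \in \ker N_{s, n} = [V, s]$, so I may choose $v_0(s, m) = m v_0(s, 1) + k(s, m)$ with $k(s, m) \in [V, s]$. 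Thus
\[
T(m) = \sum_{s \in S} \mathrm{sgn}(s)^r \sum_{v \in v_0(s, m) + K(s)} (-1)^{(r-1)\sum v_i}.
\]

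If $r$ is odd the inner exponent is even, so the inner sum is $|K(s)|$, $m$-independent. If $r$ is even, set $\chi : V \to \mathbb{Z}/2$, $\chi(v) := \sum v_i \bmod 2$; since every $S$-conjugation permutes the coordinates of $V$, all of $[V, s]$, $[V, s]^t$, $[V, st]$, and hence $K(s)$, lie in $V_0 := \ker \chi$. The inner sum therefore collapses to the character sum $(-1)^{\chi(v_0(s, m))}|K(s)|$, with
\[
\chi(v_0(s, m)) = m\,\chi(v_0(s, 1)) + \chi(k(s, m)) \equiv \chi(v_0(s, 1)) \pmod 2,
\]
as $m$ is odd and $k(s, m) \in [V, s] \subseteq V_0$. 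Summing over $s$ yields $m$-independence of $T(m)$, hence of $B(m)$, and so of $|G_n^{A_N}(u, g^m)|$. The main subtlety I expect is the parity bookkeeping -- in particular the observation that the commutator submodules $[V, h]$ for $h \in S$ automatically sit inside $V_0$ because $h$ permutes coordinates -- beyond which the argument is a direct extension of Theorem \ref{thmsymm} with sign tracking.
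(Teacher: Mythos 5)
Your proof is correct, but it takes a genuinely different route from the paper's, and is in fact more careful. The paper asserts that for $g$ a product of $e$ cycles of length $r$, $C_{A_N}(g) = C_{S_N}(g) \cong S_e \wr Z_r$ when $r$ is even and $C_{A_N}(g) \cong A_e \wr Z_r$ when $r$ is odd, and then claims the argument of Theorem~\ref{thmsymm} ``carries over verbatim, because once $s, t, w$ are fixed the rest of the calculation takes place in $V\subseteq A_N$.'' You instead decompose the $A_N$-count as $\tfrac{1}{2}(A(m)+B(m))$ via the sign character, dispose of $A(m)$ with Theorem~\ref{thmsymm}, and handle the signed sum $B(m)$ by tracking $\mathrm{sgn}(s,v)=\mathrm{sgn}(s)^r(-1)^{(r-1)\sum v_i}$ through the coset structure of that proof. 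Your approach has a concrete advantage: the signed sum $B(m)$ is genuinely multiplicative over the direct-product factorization of $C_{S_N}(g)$, so the reduction to a single cycle length is transparent, whereas the even part of a direct product is not the product of even parts. Moreover, when $r$ is even the paper's claims $C_{A_N}(g)=C_{S_N}(g)$ and $V\subseteq A_N$ are false as stated (already for $g=(1\,2)(3\,4)\in A_4$, both $(1\,2)\in C_{S_4}(g)$ and the element $v=(1,0)\in V$ are odd permutations), so the ``verbatim'' claim needs exactly the extra work you supply. Your parity bookkeeping --- $[V,h]\subseteq\ker\chi$ for $h\in S$ since $h$ permutes coordinates, hence $K(s)\subseteq\ker\chi$ so $\chi$ is constant on $v_0(s,m)+K(s)$, and $\chi(v_0(s,m))\equiv\chi(v_0(s,1))$ because $m$ is odd and $v_0(s,m)-mv_0(s,1)\in\ker N_{s,n}=[V,s]\subseteq\ker\chi$ --- is precisely the justification the paper's brief sketch elides. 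The one point worth making explicit in a final write-up is that nonemptiness of the coset $v_0(s,m)+K(s)$ is itself $m$-independent (which follows from the membership criterion $m^{-1}(n/l)w^{(st)^{-1}}\in[V,st]+[V,s]$ in the proof of Theorem~\ref{thmsymm}, since $[V,st]+[V,s]$ is a $\ZZ_r$-submodule), so that $v_0(s,m)$ exists for all admissible $m$ simultaneously or for none.
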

\begin{proof}\ Keep the same notation as before, now with $g \in A_N$.\
Then $C_{A_N}(g)=C_{S_N}(g) \cong S_e\wr Z_r$ if $r$ is even, and $C_{A_N}(g)\cong A_e\wr Z_r$ if $r$ is odd.\ In either case the proof of the Theorem carries over \emph{verbatim}, because 
once $s, t, w$ are fixed the rest of the calculation takes place in $V\subseteq A_N$. 
\end{proof}

\medskip
Pavel Etingof has communicated to us \cite{E} the proof of a natural extension of Theorem
\ref{thmsymm}, namely 
\begin{theorem}\label{thmEsymm} (Etingof) 
$S_N\wr A$ is an $FSZ$-group for any finite abelian group $A$. 
\end{theorem}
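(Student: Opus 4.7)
I would follow the strategy of Theorem \ref{thmsymm}, generalizing from $Z_r$ to an arbitrary finite abelian group $A$. Write $G = S_N \wr A = S_N \ltimes A^N$ with multiplication $(s, v)(t, w) = (st, v + s(w))$, using additive notation in $A^N$. The essential identity $(s, v)^n = (s^n, v + s(v) + \cdots + s^{n-1}(v))$ is formally the same as in the proof of Theorem \ref{thmsymm}, so decomposing $s$ into disjoint cycles $s_1 \cdots s_q$ of lengths $f_1, \ldots, f_q$ yields a cycle-by-cycle description of $(s,v)^n$ involving norms $N_{s_i}(v_i)$ inside $A^{f_i}$. The aim is to verify $(\ref{gpeqns})$ for every commuting pair $u, g \in G$ and every $m$ with $\gcd(m, |G|) = 1$.

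First I would classify conjugacy in $G$: two elements $(\sigma, a), (\sigma', a')$ are conjugate iff $\sigma$ and $\sigma'$ have the same cycle type and the multisets of cycle-products (sum of $A$-entries around each cycle of $\sigma$) agree. Using induction on $N$ together with the multiplicativity of $|G_n(u, g)|$ over direct products (Example \ref{DP}), I would reduce to the case $g = (\sigma, a)$ with $\sigma$ a product of $e$ disjoint $r$-cycles ($N = er$) and all cycle-products of $a$ along $\sigma$ equal to a common $a_0 \in A$. The centralizer $C_G(g)$ can then be described explicitly as a subgroup of an iterated wreath-type product $S_e \ltimes (Z_r \times A)^e$.

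The core calculation now proceeds as in Theorem \ref{thmsymm}: for fixed $s \in S_N$, the condition $(s, v)^n = g^m$ imposes $s^n = \sigma^m$ together with, for each cycle $s_i$ of $s$, a norm equation of the shape $(n/f_i) N_{s_i}(v_i) = m \cdot a_0 + (\text{positional correction})$ in $A^{f_i}$. Because $A$ is abelian and $\gcd(m, |A|) = 1$, the multiplication map $[m] : A \to A$ is an automorphism, and this automorphism intertwines the solution sets for $m = 1$ and for general $m$. Summing over $s$ already yields $|G_n(u, g)| = |G_n(u, g^m)|$. To upgrade this to the $FSZ^+$ statement, fix $(s, v) \in G_n(u, g)$ and $(t, w) \in C_G(u, g)$; the $v$'s simultaneously satisfying $(s, v)^n = g^m$ and $((s, v)(t, w))^n = g^m$ form an intersection of cosets in $V := A^N$, and, exactly as in the closing computation of Theorem \ref{thmsymm}, its cardinality equals either $0$ or $|[V, st] \cap [V, s]|$, with the vanishing condition invariant under $[m]$ and hence independent of $m$.

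The main obstacle is the combinatorial bookkeeping around the conjugacy-class data: conjugacy in $S_N \wr A$ carries strictly more information than in $S_N$ (namely a cycle-product in $A$ for each cycle of $\sigma$), so the reduction of $g$ to canonical form must respect both commutativity with $u$ and the action of $[m]$ on $A$. Once one verifies that the submodules $[V, s]$ and $\ker N_{s_i}$ of $A^N$ are preserved by $[m]$ for every $m$ coprime to $|A|$ (which is automatic since $[m]$ is an automorphism of $A$), the computational framework of Theorem \ref{thmsymm} transfers without essential change, yielding the $FSZ$ (indeed $FSZ^+$) property for $S_N \wr A$.
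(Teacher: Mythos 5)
The paper does not actually contain a proof of this theorem: it is attributed to Etingof via personal communication, and the authors write ``The proof, which we omit, runs along similar lines to that of Theorem~\ref{thmsymm}, but is more elaborate.'' So there is no proof in the paper to compare against directly; your proposal is an attempt to reconstruct the omitted argument.

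Your overall strategy (redo the $S_N$ computation inside $S_N\ltimes A^N$, reduce via multiplicativity to a single cycle length and a single cycle-product, then run the norm/coset computation) is the natural generalization and is certainly the right starting point. However, there is a concrete gap at the step where you describe $C_G(g)$ as ``a subgroup of \ldots $S_e\ltimes (Z_r\times A)^e$'' and assert that the Theorem~\ref{thmsymm} framework then ``transfers without essential change.'' For $g=(\sigma,a)$ with $\sigma$ a product of $e$ disjoint $r$-cycles and each cycle-product equal to $a_0\in A$, the centralizer is an extension
\[
1\longrightarrow (A^N)^{\sigma}\cong A^e\longrightarrow C_G(g)\longrightarrow C_{S_N}(\sigma)\cong S_e\wr Z_r\longrightarrow 1,
\]
and this extension need \emph{not} split. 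Already in the test case $e=1$, $A=Z_k$, $a_0$ a generator, one finds $C_G(g)=\langle g\rangle\cong Z_{rk}$, which is not $Z_r\times Z_k$ unless $\gcd(r,k)=1$; a lift of a generator of $Z_r$ to an element of order $r$ in $C_G(g)$ exists if and only if $a_0\in rA$. So the clean semidirect-product decomposition $C(g)=S\ltimes V$ that the proof of Theorem~\ref{thmsymm} leans on is no longer available, and the cocycle attached to $a$ enters the bookkeeping. This is almost certainly the ``more elaborate'' content the paper alludes to. Your appeal to the automorphism $[m]$ of $A$ intertwining solution sets is a plausible heuristic, but it does not by itself dispose of the twisted-extension issue: the sets $G_n(u,g^m)$ live in cosets whose description now depends on where $g$ sits inside a non-split extension, and one has to verify that this extra structure is still controlled uniformly in $m$. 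You should either prove that after passing to the canonical form the extension does split (which is false in general, as the example shows), or redo the closing coset computation in a form that does not presuppose a wreath-product presentation of $C_G(g)$.
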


\medskip
The proof, which we omit, runs along similar lines to that of Theorem \ref{thmsymm},
but is more elaborate.\ As an application, recall that every centralizer in
$S_N$ is a direct product of wreathed products of the type occuring in
Theorem \ref{thmEsymm}.\ Then by Theorem \ref{thmcent1} we deduce
\begin{corollary} $S_N$ is an $FSZ^+$-group.
\end{corollary}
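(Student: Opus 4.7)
The plan is to chain together three results already available in the paper. First, by Theorem \ref{thmcent1}, showing that $S_N$ is an $FSZ^+$-group is equivalent to showing that $C_{S_N}(h)$ is an $FSZ$-group for every nonidentity element $h\in S_N$. So I would fix $1\neq h\in S_N$ and reduce to proving that its centralizer is $FSZ$.

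Next I would invoke the classical description of centralizers in symmetric groups: if $h$ has cycle type $1^{e_1}2^{e_2}\cdots N^{e_N}$ with $\sum_i i e_i=N$, then
\[
C_{S_N}(h)\;\cong\;\prod_{i=1}^{N}\bigl(S_{e_i}\wr Z_i\bigr),
\]
a direct product of wreathed products of exactly the shape covered by Etingof's Theorem \ref{thmEsymm} (take $A=Z_i$, a finite abelian group, in each factor). This step is pure bookkeeping and is precisely the remark the authors make just before stating the corollary.

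Finally, each factor $S_{e_i}\wr Z_i$ is $FSZ$ by Theorem \ref{thmEsymm}, and the direct product of finitely many $FSZ$-groups is again $FSZ$ by the multiplicativity of the counts $|G_n(u,g)|$ over direct factors recorded in Example \ref{DP}. Hence $C_{S_N}(h)$ is $FSZ$, and Theorem \ref{thmcent1} then yields the $FSZ^+$ property for $S_N$. The only genuine content in the argument is Theorem \ref{thmEsymm} itself, which we are taking as a black box from Etingof; once that theorem is granted, the remainder of the deduction is purely formal and presents no real obstacle. The main place one could imagine difficulty — namely, verifying $FSZ$ for each individual wreathed factor $S_{e_i}\wr Z_i$ directly — is precisely what Theorem \ref{thmEsymm} sweeps away.
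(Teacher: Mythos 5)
Your argument is correct and matches the paper's intended proof exactly: centralizers in $S_N$ decompose as direct products of wreathed products $S_{e_i}\wr Z_i$, each of which is $FSZ$ by Theorem \ref{thmEsymm}, the product is $FSZ$ by the multiplicativity in Example \ref{DP}, and Theorem \ref{thmcent1} then upgrades $S_N$ to $FSZ^+$. Nothing to add.
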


\section{Counterexamples}

In this section we consider groups of order $5^6=15625$. We present two GAP programs, one optimized for testing these $5$-groups, and the second which is optimized to be used to test the FSZ property any arbitrary group $G$. 

\medskip
Using Corollary \ref{corequiv}, the first \cite{GAP} program can be used to test integrality of indicators for doubles of $p$-groups of exponent $p^2$. On a dual core 3.33 Mz computer with 8Gb of Ram, after approximately 6 hours of running for the 555 groups of exponent 25 among the 684 groups of order $15625=5^6$, it found 32 groups whose indicators for the Drinfel'd double are not all integers (only indicator $\nu_5$ can be non-integer, and all the other groups of order $5^6$ can easily be seen to be FSZ). These are the groups SmallGroup(12625,$k$) (in the small library) for $k$ being the ID numbers 632--635, 637, 638, 640--650, 653--655, and 657--668. We note that groups 662, 665 and 668 have few conjugacy classes, many of which give rise to representations of the Drinfeld double with non-integer indicators, and also several groups $G$ among these have a large number of conjugacy classes and still produce representations of the double with non-integer indicators. 

\medskip
One particular such group is ${\rm SmallGroup}(15625,668)$. The GAP program takes parameters $G,p,m$ and is designed to test the integrality of the $m$-indicators of representations of $D(G)$ induced from cyclic subgroups of prime order $p$ (only). This is enough to test the FSZ property for a p-group of exponent $p^2.$ The command $${\rm FSInd}({\rm SmallGroup}(15625,668),5,5)$$ tests the integrality of indicators $\nu_5$ for representations induced from cyclic groups of order $5$ and determines those conjugacy classes (that is, their GAP ID) for which there are associated representations of the double which have non-integral indicators. It runs in a few seconds on the computer hardware configuration mentioned above for group ${\rm SmallGroup}(15625,668)$. 

\medskip
By Lemma \ref{gal} all indicators are in $\QQ(\zeta_5)$, and since they are also real by Remark \ref{p.real}, the non-integer indicators are algebraic 
integers in $\QQ(\sqrt 5)\setminus \QQ$. We confirmed this using a GAP program of Rebecca Courter \cite{C}, which computes values for the FS-indicators of irreducible representations of the double of ${\rm SmallGroup}(15625,668)$.

-----------------------------------------------------

\begin{verbatim}
FSInd:=function(G,m,p)
local GG,list,Gm,slist,gucoef,u,Cl,T,H,nu; 

GG:=EnumeratorSorted(G);; 
elG:=Size(G); 
Gm:=[];;
list:=[];; slist:=[]; gucoef:=[];
for i in [1..elG] do Gm[i]:=Position(GG,GG[i]^m); od; 

for i in [1..elG] do list[i]:=[]; slist[i]:=0; od;
for i in [1..elG] do Add(list[Gm[i]],i); od;

for Cl in ConjugacyClasses(G) do
    u:=Representative(Cl);
             Print("u=",Position(GG,u),"; ");
    for i in [1..elG] do gucoef[i]:=0; od;   
    for i in [1..elG] do 
        for j in list[i] do if Position(GG,GG[j]*u) in list[i] 
        then gucoef[i]:=gucoef[i]+1; fi; od;

    od;    

    H:=1;
    Print("INDICATORS:\n");
    for i in [1..elG] do T:=1;; if Order(GG[i])=p then
          nu:=0;
          for j in [0..Order(GG[i])-1] do 
          nu:=nu+ (E(p)^j)*gucoef[Position(GG,GG[i]^j)]; od;
          Print(nu,", "); 
          for j in [2..Order(GG[i])-1] do if
          gucoef[i]=gucoef[Position(GG,GG[i]^j)] then; 
                    else T:=0;; H:=0;; fi; od; fi;     
          #This checks it for rep's induced 
          from prime order subgroups;
          
          od;
          if H=1 then Print("\n OK - Integer"); 
             else Print("\n !!! N O N  -  I N T E G E R !!!"); fi; 
          Print("\n");

od;

return;
end;

\end{verbatim}

-------------------------------------------------------------------------------

Our second program provides a general computational test for whether the indicators $\nu_m$ of {\it all} the representations of the double of a group $G$ are integers; it takes the integer $m$ and the group $G$ as parameters. It uses the test provided by Corollary \ref{corequiv}, and it is optimized to attempt the least possible number of computations of products in $G$ (but uses more memory). This is necessary since products of elements (usually represented as compositions of permutations) tend to take the most time under GAP.

The algorithm runs as follows. The program first creates a list [Gm] with all the $m$'th powers of elements of $G$ (GAP works with list position), and then another variable [list] such that which for each element $x\in G$, list[x]  retains all $h$ for which $h^m=x$ (this saves doing computations of the type $\underbrace{h*h*h\dots *h}$ each time we compute an indicator). Then, we create the list variable [glist], which will retain the sets $\{g^{k}|\gcd({\rm ord}(g),k)=1\}$ - elements generating the same cyclic subgroup, on which we have to test the condition of Corollary \ref{corequiv}. Next, for each conjugacy class Cl of $G$, we pick an element of Cl and we create a list [gucoef] which for each element $g$ of $G$ will count the number of solutions of the equation $h^m=(hu)^m=g$ (obviously, for the purpose of testing integrality of all indicators, we can just work with $u$ instead of $u^{-1}$). This uses the previously created variable [list] for the 
$m^{th}$roots of $g$ (so that we don't need to perform supplementary multiplications). Once this is done, using 
Corollary \ref{corequiv}, one only needs to check that the number of these solutions is constant on the sets contained in the variable [list]. The printout also shows the GAP ID of some representative $u$ of each conjugacy class and whether $Cl(u^{-1})$ produces some representation of $D(G)$ with non-integer indicator $\nu_m$. The program could be slightly optimized to jump out of the loop whenever a non-integer indicator is found, and it can be easily modified to find the indicators for all the representations.

\begin{verbatim}
FSIndG:=function(G,m)
local GG,list,Gm,gucoef,u,Cl,H,expo, 
      glist,aux,listaux,Gaux,g,h,og,gaux; 

GG:=EnumeratorSorted(G);; expo:=Exponent(G);
elG:=Size(G); 
Gm:=[];;
list:=[];; gucoef:=[];
for i in [1..elG] do Gm[i]:=Position(GG,GG[i]^m); od; 
         #create the list of m-powers of elements
for i in [1..elG] do list[i]:=[]; od;
for i in [1..elG] do Add(list[Gm[i]],i); od;
         #create for each element x the list of its m-"radicals" h
glist:=[]; og:=Size(G); Gaux:=G;
while og>0 do g:=Representative(Gaux); aux:=[];
           if Order(g) * m < expo+1 then 
           for j in PrimeResidues(Order(g)) do 
           h:=g^j; Add(aux,Position(GG,h)); og:=og-1; 
           Gaux:=Difference(Gaux,[h]); od; 
           Add(glist,aux); 
           else og:=og-1; Gaux:=Difference(Gaux,[g]); fi;  
           od; 
         #this creates the list "glist" whose members are sets 
         #of elements of G generating the same subgroup
for Cl in ConjugacyClasses(G) do
    u:=Representative(Cl);
             Print("u=",Position(GG,u),"; ");
    for i in [1..elG] do gucoef[i]:=0; od;   
    for i in [1..elG] do 
        for j in list[i] do if Position(GG,GG[j]*u) in list[i] 
        then gucoef[i]:=gucoef[i]+1; fi; od;

    od;    
         #gucoef[g] counts the number of solutions of h^m=(hu)^m=g

    H:=1;    
    for i in glist do 
             #nu:=0;
             for j in i do if gucoef[i[1]]=gucoef[j] then; 
                       else H:=0;; fi; od;     

             #This uses our results to check integrality by testing  
             #that the number of solutions in H_m(g) of h^m=g^k and 
             #h^m=g^l is the same for gcd(k,ord(g))=gcd(l,ord(g))
             #H retains whether the integer condition is verified
             
             od;
             if H=1 then Print(" OK - Integers"); 
                else Print(" !!! N O N  -  I N T E G E R !!!"); fi; 
             Print("\n");
od;

return;
end;
\end{verbatim}

Using this last program, we observed that all the 2-groups up to order 512 are FSZ. Since 2-groups can in general behave differently than other p-groups, it is natural to ask whether perhaps all 2-groups are FSZ.

\medskip
It is also known that all the indicators of the representations of the group $S_n$ are {\it non-negative} integers 
\cite{Scf}. Thus one might ask whether this is true for $D(S_n)$ as well. This has been shown in \cite{C} for 
$n \leq 10$. It is also known that for any dihedral group, all indicators of its double are non-negative \cite{K}.

\bigskip\bigskip\bigskip

\end{document}